\theoremstyle{plain}
\theoremstyle{plain}
\newtheorem{thm}{Theorem}[section]
\newtheorem{prop}[thm]{Proposition}
\newtheorem{cor}[thm]{Corollary}
\newtheorem{lem}[thm]{Lemma}
\theoremstyle{definition}
\newtheorem{ex}[thm]{Example}
\newtheorem{conj}[thm]{Conjecture}
\newcommand{\cla}{\mathcal{L}_{\mathnormal{A}}}
\newcommand{\cra}{\mathcal{R}_{\mathnormal{A}}}
\newcommand{\ind}{\mathrm{ind}\,}
\newcommand{\add}{\mathrm{add}\, }
\newcommand{\End}{\mathrm{End}}
\renewcommand{\ker}{\operatorname{Ker} }
\newcommand{\coker}{\operatorname{Coker} }
\newcommand{\im}{\operatorname{Im} }
\newcommand{\md}{\mathrm{mod}\, }
\newcommand{\tor}{\operatorname{Tor}}
\renewcommand{\hom}{\operatorname{Hom}}
\newcommand{\dimp}{\mathrm{pd}  }
\newcommand{\di}{\mathrm{id} }
\newcommand{\dimgl}{\mathrm{gl.dim} }
\newcommand{\maximo}{\textit{max}}
\newcommand{\ext}{\mathrm{Ext}}
\newcommand{\gd}{\mathrm{gl.dim}}
\newcommand{\rar}{\rightarrow}
\newcommand{\xrar}{\xrightarrow}
\newcommand{\cal}{\mathcal}
\numberwithin{equation}{section}
\begin{document}

\title[$(m,n)$-Quasitilted and $(m,n)$-almost hereditary algebras]{$(m,n)$-Quasitilted and $(m,n)$-almost hereditary algebras}


\author{Edson Ribeiro Alvares}
\author{Diane Castonguay}
\author{Patrick Le Meur}
\author{Tanise Carnieri Pierin}

\address[Edson Ribeiro Alvares]{Centro Polit\'ecnico, Departamento de Matem\'atica,
  Universidade Federal do Paran\'a, CP019081, Jardim das Americas,
  Curitiba-PR, 81531-990, Brazil
}

\email{rolo1rolo@gmail.com, rolo@ufpr.br}

\address[Diane Castonguay]{Instituto de Inform\'atica, Universidade Federal de Goi\'as, Campus II - Samambaia, CEP: 74001-970, Goi\^ania, Brazil}
\email{diane@inf.ufg.br}

\address[Patrick Le Meur]{Universit\'e Paris Diderot, Sorbonne Paris Cit\'e, Institut de
  Math\'ematiques de Jussieu-Paris Rive Gauche, UMR 7586, CNRS,
  Sorbonne Universit\'es, UMPC Univ. Paris 06, F-75013, Paris, France}
\email{patrick.le-meur@imj-prg.fr}

\address[Tanise Carnieri Pierin]{Centro Polit\'ecnico, Departamento de Matem\'atica,
  Universidade Federal do Paran\'a, CP019081, Jardim das Americas,
  Curitiba-PR, 81531-990, Brazil
}
\email{tanpierin@gmail.com, tanise@ufpr.br}

\thanks{The first named author acknowledges support from DMAT-UFPR and
  CNPq - Universal 477880/2012-6}

\thanks{The fourth named author acknowledges support from CNPq and CAPES}

\date{\today}

\begin{abstract}
  Motivated by the study of $(m,n)$-quasitilted algebras, which are
  the piecewise hereditary algebras obtained from quasitilted algebras
  of global dimension two by a sequence of (co)tiltings involving
  $n-1$ tilting modules and $m-1$ cotilting modules, we introduce
  $(m,n)$-almost hereditary algebras. These are the algebras with global
  dimension $m+n$ and such that any indecomposable module has
  projective dimension at most $m$, or else injective dimension at
  most $n$. We relate these two classes of algebras, among which
  $(m,1)$-almost hereditary ones play a special role. For these, we
  prove that any indecomposable module lies in the right part of the
  module category, or else in an $m$-analog of the left part. This is
  based on the more general study of algebras the module categories of
  which admit a torsion-free subcategory such that any indecomposable
  module lies in that subcategory, or else has injective dimension at
  most $n$.
\end{abstract}

\maketitle

\setcounter{tocdepth}{1}
\tableofcontents

\section*{Introduction}

Quasitilted algebras were defined in \cite{MR1327209} as
the opposite algebras of endomorphism algebras of tilting objects
of $\hom$-finite, Krull-Schmidt hereditary abelian categories. These
algebras feature several properties and characterisations which
explain their relevance. First, by \cite[Chapter II, Theorem
2.3]{MR1327209}, an Artin algebra over an Artin ring is quasitilted
if and only if it has global dimension at most two and, for any
indecomposable module $X$,
\begin{equation}
  \label{eq:9}
  \dimp\,X\leqslant 1\ \text{or else}\ \di\,X\leqslant 1\,.
\end{equation}
Next, for any quasititled algebra $A$, the following decomposition
holds (see \cite[Chapter II, Proposition 1.6]{MR1327209})
\begin{equation}
  \label{eq:10}
  \ind A = \mathcal L_A \cup \mathcal R_A,
\end{equation}
where $\mathcal L_A$ and $\mathcal R_A$ are the left and right parts
of the module category of $A$, respectively. These subcategories are
efficient tools to classify algebras and study their representation
theory. In particular (see \cite[Theorem 1.14]{MR1327209}),
\begin{equation}
  \label{eq:11}
  \text{$A$ is quasitilted if and only if $A\in \add \mathcal L_A$;}
\end{equation}
we refer the reader to \cite{MR2147013} for a survey on these
subcategories.  Finally, by general theory of derived equivalences
arising from tilting complexes, they are piecewise hereditary, that
is, the bounded derived category of their module categories are
triangle equivalent to those of hereditary abelian categories.

Recall that, for any piecewise hereditary algebra, there is a sequence
of algebras with first term a quasitilted algebra, with last term the
given algebra and such that each algebra of the sequence is the
opposite algebra of the endomorphism algebra of a splitting tilting or
cotilting module over the preceding algebra (see
\cite{zbMATH00938523}, or Section~\ref{sec:m-n-quasitilted-1}
for a reminder). In many examples, that given algebra often has
homological properties like (\ref{eq:9}) depending on the number of
tilting modules and the number of cotilting modules involved in the
sequence.

\medskip

Given positive integers $m$ and $n$, we introduce $(m,n)$-quasitilted
algebras, defined as the algebras for which there is a sequence as
above such that the number of involved cotilting modules is $m-1$, the
number of involved tilting modules is $n-1$ and the global dimensions
of the algebras of the sequence increase strictly. The aim of this
article is to show that these algebras have homological properties
like (\ref{eq:9}) and to derive consequences relative to their
representation theory in terms of specific subcategories, like
(\ref{eq:10}).

For this purpose, we define the $(m,n)$-almost hereditary algebras as the
algebras with global dimension $m+n$ and such that, for any
indecomposable module $X$,
\begin{equation}
  \label{eq:12}
  \dimp\,X\leqslant m \ \text{or else}\ \di\,X\leqslant n\,.
\end{equation}
Note that (\ref{eq:12}) alone implies that the global dimension is at
most $m+n+1$ (see Lemma~\ref{sec:m-n-quasitilted-2}).

Our results establish that any $(m,n)$-quasitilted algebra is
$(m,n)$-almost hereditary (Corollary~\ref{sec:m-n-quasitilted-3});
although this is an equivalence when $(m,n)=(1,1)$, the converse
implication does not hold in general. Moreover, it appears that any
$(m,n)$-quasitilted algebra is $(m+n-1,1)$- or else $(1,m+n-1)$-almost
hereditary (Lemma~\ref{sec:m-n-quasitilted-4}). This shows the
relevance of $(m,1)$- and $(1,m)$-almost hereditary algebras among the
ones considered previously. For these algebras, we prove in
Theorem~\ref{principal} that, if $A$ is $(m,1)$-almost hereditary,
then
\begin{equation}
  \label{eq:14}
  \ind A = \mathcal
  L^m_A \cup \mathcal R_A,
\end{equation}
where $\mathcal L_A^m$ denotes the class of indecomposable modules
such that the predecessors in $\ind A$ of which have projective dimension at
most $m$. This result is obtained as a consequence of a more general
one: we prove in Proposition~\ref{sec:suggestion-section-4-5} that if
$\mathcal C$ is a torsion-free class in the module category of $A$
such that any indecomposable module lies in $\mathcal C$ or else has
injective dimension at most $n$, then
\begin{equation}
  \label{eq:13}
  \ind A = \mathcal L_{\mathcal C} \cup \mathcal R^m_A,
\end{equation}
where the definition of $\mathcal R_A^m$ is dual to that of
$\mathcal L_A^m$ and the definition of $\mathcal L_{\mathcal C}$ is
given in \ref{sec:algebras-with-small}. These results raise the
question to determine when an algebra is $(m,1)$-almost hereditary. We
give a partial answer to this question in
Proposition~\ref{sec:suggestion-section-4-7} (compare with (\ref{eq:11})),
\begin{center}
  $A$ lies $\mathcal L_A^m$ and has global dimension $m+1$
  $\Rightarrow$ $A$ is $(m,1)$-almost hereditary.
\end{center}

\medskip

The article is therefore organised as follows. Base material is
collected in
Section~\ref{sec:preliminaries}. Section~\ref{sec:m-n-quasitilted-1}
introduces $(m,n)$-quasitilted algebras and $(m,n)$-almost hereditary
ones, and relates them. Section~\ref{sec:m-1-almost} studies
$(m,1)$-almost hereditary algebras in terms of $\mathcal L_A^m$ and
$\mathcal R_A$. Finally, Section~\ref{sec:one-point-extensions}
studies the behaviour of $(m,1)$-almost hereditary algebras under
taking one point extensions.

\medskip

Throughout the text, $k$ denotes an
Artin commutative ring and $A$ denotes an Artin $k$-algebra.

\section{Preliminaries}
\label{sec:preliminaries}
We denote by $\md A$ the category of finitely generated left
$A$-modules and by $\ind A$ a full subcategory consisting of exactly
one representative from each isomorphism class of indecomposable
$A$-modules. For a subcategory ${\cal C}$ of $\md A$ we write
$M \in {\cal C}$ to express that $M$ is an object in ${\cal C}$.

If $T \in \md A$, then $\add T$ denotes the full subcategory of
$\md A$ whose objects are the direct sums of direct summands of $T$.
Given an $A$-module $M$, we denote by $\dimp_A M$ and $\di_A M$,
respectively, its projective and injective dimensions. The global
dimension of $A$ is denoted by $\dimgl A$.

The space of morphisms from an object $X$ to an object $Y$ in $\mathcal
D^b(\md A)$ is denoted by $\hom(X,Y)$.
Whenever $T\in \mathcal D^b(\md A)$ is a tilting complex, that is,
there is no nonzero morphism $T\to T[i]$ in $\mathcal D^b(\md A)$ for
all $i\in \mathbb Z \backslash\{0\}$ and $\mathcal D^b(\md A)$ is the
smallest triangulated subcategory of $\mathcal D^b(\md A)$ containing
$T$ and stable under taking direct summands, the module categories of
$\md A$ and $\md B$ have equivalent bounded derived categories, where
$B$ is the endomorphism algebra of $T$ in $\mathcal D^b(\md A)$. In
such a situation, $\md B$ is identified with the following subcategory
of $\mathcal D^b(\md A)$,
\[
\{X \in \mathcal D^b(\md A)\ |\ (\forall i\in \mathbb
Z\backslash\{0\})\ \hom(T,X[i])=0\}\,.
\]
In particular, whenever $X,Y\in \md A\,\cap\, \md B$ and $i\in \mathbb Z$,
then $\ext_A^i(X,Y)$ is naturally identified with $\ext_B^i(X,Y)$.

For further background on the representation theory of $A$, we refer
the reader to \cite{MR1314422}, \cite{MR2197389}, \cite{MR935124}.

\subsection{Paths, left and right parts} Given $M, N \in \ind A$, a {\it path} from $M$ to $N$ (denoted by $M \rightsquigarrow N$) is a sequence of nonzero morphisms 
$$(*) \ \ M = M_0 \stackrel{f_1}\rightarrow M_1 \rightarrow \cdots \stackrel{f_t}\rightarrow M_t = N$$
where $M_i \in \ind  A$ for all $i$. In this case $N$ is called {\it successor} of $M$ and $M$ {\it predecessor} of $N$.

Following \cite{MR1939113}, the {\it left part} $\cla$ of $\md A$ is
the full subcategory whose objects are those $M \in \ind A$ such that
every predecessor of $M$ in $\ind A$ has projective dimension at most
one. Clearly $\cla$ is closed under predecessors. The {\it right part}
$\cra$ is defined dually and has dual properties.

\subsection{A basic fact on short exact sequences} 
The following lemma is used several times in this article (see
\cite[Lemma 1.2, part (ii)]{MR2413349} for a similar statement).
\begin{lem}
  Let $0 \rar X \xrar{f}  E \xrar{g} Y \rar 0$ be a non-split exact sequence.
  \begin{itemize}
  \item[$(a)$] If $X$ is indecomposable then each coordinate morphism of $g$ is nonzero.
  \item[$(b)$] If $Y$ is indecomposable then each coordinate morphism of $f$ is nonzero.
  \end{itemize}
\end{lem}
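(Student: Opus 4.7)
The plan is to argue by contradiction in both cases, reducing everything to the fact that in a direct-sum decomposition $E=\bigoplus_{i=1}^r E_i$ into nonzero summands, a vanishing coordinate of $g$ (resp.\ $f$) forces a splitting of the exact sequence.

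For part $(a)$, I would fix a decomposition $E=\bigoplus_{i=1}^r E_i$ with all $E_i\neq 0$, so that $g$ is given by a tuple $(g_1,\ldots,g_r)$ with $g_i\colon E_i\to Y$, and assume that some $g_j=0$. Setting $E'=\bigoplus_{i\neq j}E_i$ and $g'=(g_i)_{i\neq j}\colon E'\to Y$, the factorisation $g=g'\circ\pi$ through the projection $\pi\colon E\to E'$ shows that $g'$ is still surjective and that $\ker g=E_j\oplus \ker g'$ inside $E=E'\oplus E_j$. Since $f$ is injective with image $\ker g$ and $X$ is indecomposable, one obtains $X\cong E_j\oplus \ker g'$, which forces $\ker g'=0$ because $E_j\neq 0$. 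Hence $g'\colon E'\to Y$ is an isomorphism and the composite $Y\xrightarrow{(g')^{-1}}E'\hookrightarrow E$ is a section of $g$, contradicting the non-splitting hypothesis.

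For part $(b)$, I would carry out the dual argument: assuming some coordinate $f_j\colon X\to E_j$ vanishes, the remaining coordinates assemble into an injection $f'\colon X\to E'=\bigoplus_{i\neq j}E_i$ through which $f$ factors. Then the induced decomposition gives $Y\cong \coker f\cong E_j\oplus \coker f'$, and the indecomposability of $Y$ together with $E_j\neq 0$ forces $\coker f'=0$. Hence $f'$ is an isomorphism and $X\xrightarrow{(f')^{-1}\circ \pi'}E$ with $\pi'\colon E\to E'$ the projection yields a retraction of $f$, again contradicting non-splitness.

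The argument is essentially bookkeeping with direct-sum decompositions, so there is no real obstacle; the only point to be careful about is to state at the outset that one works with a decomposition into nonzero summands (otherwise the notion of "coordinate morphism" is vacuous for a zero summand), and to notice that the two parts really are formal duals of each other, so that $(b)$ can either be written out symmetrically or deduced from $(a)$ applied in the opposite category.
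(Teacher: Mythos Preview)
Your proof is correct and follows essentially the same contradiction strategy as the paper: a vanishing coordinate of $g$ forces $\ker g$ to contain the corresponding summand as a direct factor, and indecomposability of $X$ then produces a splitting. The only cosmetic difference is that the paper phrases the conclusion on the $f$-side (showing the coordinate $f_1\colon X\to E_1$ is a split epimorphism, hence an isomorphism), whereas you phrase it on the $g$-side (showing $g'\colon E'\to Y$ is an isomorphism); these are two equivalent ways to exhibit the splitting.
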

\begin{proof}
  We only  prove $(a)$ since the proof of $(b)$ can be obtained dually. Suppose that $E=E_1\oplus E_2$, where $E_1$ is indecomposable, and $g_1 = 0$. Consider the following commutative diagram
   \begin{center}
     \begin{tikzcd}[ampersand replacement=\&]
       \& 0 \ar{d} \& 0 \ar{d} \&\& \\
       \& E_1 \ar{r}{1} \ar{d} \& E_1 \ar{d}{\left(\begin{smallmatrix} 1 \\ 0 \end{smallmatrix}\right)} \&\& \\ 
       0 \ar{r} \& X \ar{r}{\left(\begin{smallmatrix} f_{1} \\ f_{2} \end{smallmatrix}\right)} \ar{d} \& E_1 \oplus E_2 \ar{r}{\left(\begin{smallmatrix} 0 \ g_{2} \end{smallmatrix}\right)} \ar{d}{\left(\begin{smallmatrix} 0 \ 1 \end{smallmatrix}\right)} \& Y \ar{r} \ar{d}{1} \& 0 \\
       0 \ar{r} \& \ker g_2 \ar{r} \ar{d} \& E_2 \ar{r}{g_2} \ar{d} \& Y \ar{r} \& 0\\
       \& 0 \& 0. \& \& 
     \end{tikzcd}
   \end{center}
  It follows that  $f_1$ is a split epimorphism and, because $X$ is indecomposable, $f_1$ is an isomorphism. Then the exact sequence $0 \rar X \rar E_1 \oplus E_2  \rar Y \rar 0$ splits, which is a contradiction.
\end{proof}

\subsection{A short review of tilting theory}

Recall that $_{A}T \in \md A$ is called a {\it tilting module} provided the following three conditions are satisfied:
\begin{enumerate}
\item $\dimp _{A} T\leq 1$,
\item  $\ext^{1}_{A}(T, T) = 0$
\item the number of pairwise non-isomorphic indecomposable direct summands
  of $T$ equals the rank of the Grothendieck group, $K_{0}(A)$.
\end{enumerate} 

Given a tilting module $_{A}T$, let $B = (\End_{A} T)^{\rm
  op}$.
Recall that $_{A}T_{B}$ induces torsion pairs
$(\mathcal{T}(T), \mathcal{F}(T))$ on $\md A$ and
$(\mathcal{X}(T), \mathcal{Y}(T))$ on $\md B$, where
${\cal T}(T) = \{ X \in \md A; \ext^{1}_{A}(T, X) = 0 \}$,
${\cal F}(T) = \{X \in \md A; \hom_{A}(T, X) = 0 \}$,
${\cal X}(T) = \{ X \in \md B; T \otimes_{B} X = 0 \}$ and
${\cal Y}(T) = \{X \in \md B; \tor_{1}^{B}(T,X) = 0 \}$. Due to the
Brenner - Butler Theorem (\cite{MR675063}), $\md A$ and $\md B$ are
related as follows: restriction of functor
$\hom_{A}(T,-): \md A \rightarrow \md B$ to $\cal T(T)$ is an
equivalence ${\cal T }(T) \rightarrow {\cal Y}(T)$ and restriction of
$\ext_{\Lambda}^{1}(T,-)$ to ${\mathcal{F}}(T)$ gives rise to an
equivalence ${\cal F}(T) \rar {\cal X}(T)$. A tilting module $_{A}T$
is called {\it splitting} if the torsion pair
$(\mathcal{X}(T), \mathcal{Y}(T))$ on $\md B$ splits, that is, if each
indecomposable $B$-module lies in $\mathcal{X}(T)$, or else in
$\mathcal{Y}(T)$. According to Hoshino \cite{zbMATH03797988}, if
$_{A}T$ is a tilting module, then $_{A}T$ is splitting if and only if
$\di_{A} X \leq 1$ for every $X \in {\cal F}(T)$.

Dually, it is possible to define a {\it cotilting} module. We will use
but not state explicitly the dual results and properties which hold
true for cotilting modules. For further definitions and results on
tilting theory, we refer the reader to \cite{MR675063},
\cite{MR774589}.

\section{$(m,n)$-Quasitilted algebras}
\label{sec:m-n-quasitilted-1}

The purpose of the section is to relate $(m,n)$-quasitilted algebras
to $(m,n)$-almost hereditary algebras.  We say that $A$ is {\it
  $(m,n)$-quasitilted} if there exists a sequence of triples
$(A_i, T_i, A_{i+1} = (\End_{A_i} T_i)^{\rm op})$ such that: $A_0$ is a
quasitilted algebra of global dimension two; $A = A_{m+n-2}$; each
$T_i$ is a \emph{stair} splitting tilting or cotilting $A_i$-module,
that is a splitting tilting or cotilting module with the property that
$\dimgl A_i < \dimgl A_{i+1}$; and $n-1$ modules among the $T_i$ are
tilting whereas the $m-1$ remaining ones are cotilting. Any
$(m,n)$-quasitilted algebra is piecewise hereditary. Actually, it is
proved in \cite{zbMATH00938523} that any piecewise hereditary algebra
$A$ may be obtained from some quasitilted algebra by a sequence of
tilting or cotilting processes, where each involved (co)tilting module
is splitting. And, when $k$ is a field and $A$ is derived equivalent
to a finite dimensional hereditary algebra, then $A$ may be obtained
from hereditary algebra by a sequence of tilting processes, where each
involved tilting module is splitting (see \cite{MR916069}).

By definition, the $(1,1)$-quasitilted algebras are the quasitilted
algebras of global dimension two; accordingly a $(1,1)$-quasitilted
algebra $A$ satisfies {\it (i)} $\gd A =2$ and {\it (ii)}
$\dimp_A X \leq 1$ or else $\di_A X \leq 1$, for each indecomposable
$A$-module $X$ (\cite{MR1327209}). Below, we prove that any
$(m,n)$-quasitilted algebra $A$ has the two following homological
properties.
\begin{enumerate}
\item[$(Q1)$] $\gd A = m+n$;
\item[$(Q2)$] for each indecomposable $A$-module $X$, $\dimp_A X \leq m$ or else $\di_A X \leq n$.
\end{enumerate}
We call {\it $(m,n)$-almost hereditary} an algebra
which satisfies $(Q1)$ and $(Q2)$, where $m$ and $n$ are positive
integers.  Note that condition $(Q2)$ is not a consequence of
condition $(Q1)$ as shown in the following example.
\begin{ex}
  \label{sec:m-n-quasitilted-5}
  Let $A$ be the path algebra of the quiver
  \[
  \xymatrix@R=5pt{
    1 \ar[rr]
    \ar[dr]_{\alpha} & & 2 \\ & 3 \ar[ur]_{\beta} &
  }
  \]
  bound by
  the relation $\beta \alpha = 0$; the indecomposable $A$-module
  $\begin{smallmatrix} 2 \\ 1 \end{smallmatrix}$ has projective and
  injective dimension equal to two, while global dimension of $A$ is
  two.
\end{ex}
\label{sec:m-n-quasitilted-6}
Note also that condition $(Q1)$ cannot be obtained from condition
$(Q2)$ as shown in the following example.
\begin{ex}
  Consider the radical square zero algebra $A$
  given by the quiver
  \[
  1\to 2 \to \cdots \to m+n+2\,,
  \]
  where $m$, $n \geq 1$. In this case, $\gd A = m+n+1$, whereas each
  indecomposable $A$-module has projective dimension at most $m$ or
  injective dimension at most $n$.  Of course, $A$ is a $(a,b)$-almost
  hereditary algebra for all positive integers $a,b$ such that
  $a+b=m+n+1$.
\end{ex}
In fact, it is possible to verify that an algebra which
satisfies condition $(Q2)$ and with global dimension greater than
$m+n-1$ is a $(m,n)$-almost hereditary or a $(m+1,n)$-almost
hereditary algebra, as stated in the following lemma.
\begin{lem}
  \label{sec:m-n-quasitilted-2}
  Let $m$ and $n$ be positive integers and let $A$ be an algebra such
  that $\dimp_A X \leq m$ or $\di_A X \leq n$ for each indecomposable
  $A$-module $X$. Then $\gd A \leq m+n+1$. In particular, if
  $\gd A > m+n-1$ then $A$ is a $(m,n)$-almost hereditary or a
  $(m+1,n)$-almost hereditary algebra.
\end{lem}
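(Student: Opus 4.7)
I would prove the main inequality $\gd A \leq m+n+1$ by contradiction, extracting from an algebra with $\gd A \geq m+n+2$ an indecomposable module that violates the hypothesis. Starting with a simple module $S$ satisfying $\dimp_A S \geq m+n+2$ (which exists because $\gd A$ equals the supremum of $\dimp_A S$ over simple $S$), I would pass to the $(n+1)$-st syzygy $\Omega^{n+1} S$ in a minimal projective resolution $\cdots \to P_1 \to P_0 \to S \to 0$. Minimality gives $\dimp_A \Omega^{n+1} S = \dimp_A S - (n+1) \geq m+1$, and since $\dimp_A$ distributes as a maximum over indecomposable summands, some indecomposable summand $Y$ of $\Omega^{n+1} S$ satisfies $\dimp_A Y \geq m+1$. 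In particular $\dimp_A Y > m$, so the hypothesis forces $\di_A Y \leq n$.

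The heart of the argument would be to show $\ext_A^{n+1}(S, Y) \neq 0$ and thereby contradict $\di_A Y \leq n$. Applying $\hom_A(-, Y)$ to the short exact sequence $0 \to \Omega^{n+1} S \to P_n \to \Omega^n S \to 0$ and iterating dimension shift yields the standard identification
\[
\ext_A^{n+1}(S, Y) \cong \hom_A(\Omega^{n+1} S, Y) / \{f \mid f \text{ extends to some map } P_n \to Y\}.
\]
I would apply this to the canonical projection $p \colon \Omega^{n+1} S = Y \oplus Y' \to Y$. If $p$ extended to some $h \colon P_n \to Y$, the composition $Y \hookrightarrow \Omega^{n+1} S \hookrightarrow P_n \xrightarrow{h} Y$ would equal the identity of $Y$, exhibiting $Y$ as a direct summand of the projective module $P_n$, hence forcing $Y$ itself to be projective; but this contradicts $\dimp_A Y \geq m+1 \geq 2$. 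Therefore $[p] \neq 0$ in the quotient, so $\ext_A^{n+1}(S, Y) \neq 0$, whence $\di_A Y \geq n+1$, in contradiction with $\di_A Y \leq n$.

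The ``in particular'' clause is then immediate from the main inequality: $m+n-1 < \gd A \leq m+n+1$ forces $\gd A \in \{m+n, m+n+1\}$, and the hypothesis on indecomposable modules is condition $(Q2)$ for $(m,n)$ and, a fortiori, for $(m+1,n)$, yielding an almost hereditary algebra of the appropriate type in each case. The main obstacle I anticipate is the non-extendability step: verifying that the canonical projection $p$ genuinely defines a non-zero class in $\ext_A^{n+1}(S, Y)$. The retraction argument outlined above resolves this cleanly by reducing it to the non-projectivity of $Y$.
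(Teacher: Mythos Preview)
Your proof is correct and uses essentially the same idea as the paper: pass to the $(n+1)$-st syzygy and observe that each of its non-projective indecomposable summands has injective dimension at least $n+1$, hence (by the hypothesis) projective dimension at most $m$. The paper phrases this directly --- for any $M\in\ind A$ it asserts without further comment that every indecomposable summand of $\Omega^{n+1}M$ has $\di\geq n+1$, deduces $\dimp_A\Omega^{n+1}M\leq m$, and concludes $\dimp_A M\leq m+n+1$ --- whereas you run the contrapositive starting from a simple $S$ and spell out the $\ext^{n+1}_A(S,Y)\neq 0$ step via the retraction/non-projectivity argument that the paper leaves implicit. Your extra detail is exactly the justification the paper's one-line claim needs, so the two arguments are the same in substance.
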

\begin{proof}
  Let $M\in \ind A$. Then, any indecomposable direct summand of
  $\Omega^{n+1}M$ has injective dimension at least $n+1$, and hence
  has projective dimension at most $m$. Therefore,
  $\dimp_A\,\Omega^{n+1}M\leqslant m$. Accordingly,
  $\dimp_A\,M\leqslant m+n+1$.
\end{proof}

A first step to relate $(m,n)$-quasitilted algebras to
$(m,n)$-almost hereditary ones is to investigate how the former
behave under tilting (or, dually, under cotilting). The relationship
is then obtained as a corollary.
\begin{thm}
  \label{sec:m-n-quasitilted}
  Let $m,n$ be positive integers. Let $A$ be an algebra satisfying
  $(Q2)$. Let $T$ be a splitting tilting $A$-module. Denote
  $(\End_A T)^{\rm op}$
  by $B$. Then, any indecomposable $B$-module has projective dimension
  at most $m$, or else injective dimension at most $n+1$. In particular,
  if $A$ is $(m,n)$-almost hereditary and $T$ is stair, then $B$ is
  $(m,n+1)$-almost hereditary.
\end{thm}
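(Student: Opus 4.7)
The plan is to use the derived equivalence $F := R\hom_A(T,-) \colon \mathcal{D}^b(\md A) \to \mathcal{D}^b(\md B)$ induced by the tilting module $T$. Under $F$, any $Y \in \mathcal{T}(T)$ is identified with $\hom_A(T,Y) \in \mathcal{Y}(T)$ in cohomological degree $0$, whereas any $X \in \mathcal{F}(T)$ is identified with $\ext^1_A(T,X)[-1]$, i.e.\ with an object of $\mathcal{X}(T)$ placed in degree $1$. Fix an indecomposable $B$-module $N$: the splitting of $T$ forces $N \in \mathcal{Y}(T)$ or $N \in \mathcal{X}(T)$, and by the same splitting it suffices, when bounding $\dimp_B N$ or $\di_B N$, to test vanishing of Ext against indecomposables $N'$ that again lie in $\mathcal{Y}(T)$ or $\mathcal{X}(T)$. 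Each of the four mixed configurations translates through $F$ into an Ext group in $\md A$, with the degree shifted by $\pm 1$ precisely when exactly one of the two arguments lies on the $\mathcal{X}(T)$-side.

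First I would dispatch the case $N \in \mathcal{X}(T)$: here $N = \ext^1_A(T,X)$ for some indecomposable $X \in \mathcal{F}(T)$, and Hoshino's criterion (recalled in the preliminaries) yields $\di_A X \leq 1$. Translating each $\ext^i_B(N',N)$ through $F$ produces either $\ext^i_A(X',X)$ (when $N' \in \mathcal{X}(T)$) or $\ext^{i+1}_A(Y',X)$ (when $N' \in \mathcal{Y}(T)$); both groups vanish for $i \geq 2$, so $\di_B N \leq 1 \leq n+1$.

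Next, for $N \in \mathcal{Y}(T)$, write $N = \hom_A(T,Y)$ with $Y \in \mathcal{T}(T)$ indecomposable and apply $(Q2)$ on $A$ to $Y$. If $\dimp_A Y \leq m$, translation of $\ext^i_B(N,N')$ produces $\ext^i_A(Y,Y')$ or $\ext^{i+1}_A(Y,X')$, both vanishing for $i > m$, so $\dimp_B N \leq m$. Dually, if $\di_A Y \leq n$, translation of $\ext^i_B(N',N)$ produces $\ext^i_A(Y',Y)$ or $\ext^{i-1}_A(X',Y)$; the second term vanishes only for $i > n+1$, which is exactly what degrades the bound to $\di_B N \leq n+1$. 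This completes the proof of the first assertion.

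For the final sentence, assume that $A$ is $(m,n)$-almost hereditary and that $T$ is stair. By the first part, $B$ satisfies $(Q2)$ with parameters $(m,n+1)$, so it only remains to show $\gd B = m+n+1$. Happel's tilting inequality $\gd B \leq \gd A + \dimp_A T$ gives $\gd B \leq m+n+1$, while the stair hypothesis gives $\gd B > \gd A = m+n$; together these force equality. The main obstacle is the careful bookkeeping of the $\pm 1$ Ext-degree shifts between $\mathcal{Y}(T)$ and $\mathcal{X}(T)$ in the derived equivalence, since it is exactly this shift that prevents the bound $\di_B N \leq n$ and forces the $(n+1)$ appearing in the statement.
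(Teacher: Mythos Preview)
Your proof is correct and follows essentially the same approach as the paper: both arguments split $\ind B$ into $\mathcal X(T)$ and $\mathcal Y(T)$ via the splitting hypothesis, translate the relevant $\ext_B$-groups to $\ext_A$-groups through the derived equivalence (with the same $\pm 1$ shifts), and then invoke Hoshino's criterion for $\mathcal F(T)$ together with $(Q2)$ for the two subcases of $\mathcal T(T)$. Your explicit use of Happel's inequality $\gd B \leq \gd A + 1$ to pin down $\gd B = m+n+1$ makes the final sentence slightly more detailed than the paper's, which simply appeals to the definition of stair, but the substance is identical.
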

\begin{proof}
  We only prove the first statement because the second one follows
  from the first one and from the definition of stair tilting modules.
  For all subcategories $\mathcal A$, $\mathcal B$ of $\md A$ and
  $i\in \mathbb Z$, denote by $\ext^i_A(\mathcal A,\mathcal B)$ the
  collection $\{\ext^i_A(X,Y); X \in \mathcal A, Y \in \mathcal B\}$. 
  Also, for a full subcategory $\mathcal A$ of $\md A$, we denote
  ${\rm sup}\,\{\di_A X; X\in \mathcal A\}$ by $\di_A\,\mathcal
  A$.
  First, $\di_B\,\mathcal X(T) \leqslant 1$. Indeed, using
  that $\di_A\,\mathcal F(T) \leqslant 1$, we have
  \[
  \begin{array}{rcl}
    \ext^2_B(\mathcal X(T),\mathcal X(T))
    & = &
          \ext^2_A(\mathcal F(T),\mathcal F(T))=0 \\
    \ext^1_B(\mathcal Y(T),\mathcal X(T))
    & = &
          \ext^2_A(\mathcal T(T),\mathcal F(T)) =0\,.
  \end{array}
  \]
  Next, consider the decomposition
  \[
  (\ind \mathcal Y(T)=)\ind \mathcal T(T) = \mathcal C_1 \cup \mathcal C_2\,,
  \]
  where $\mathcal C_1=\{X\in \ind \mathcal T(T); \dimp_A X\leqslant
  m\}$ and $\mathcal C_2 = \{X\in \ind \mathcal T(T);
  \di_A X\leqslant n\}$. 
  Since
  \[
  \begin{array}{rcl}
    \ext^{m+1}_B(\mathcal C_1,\mathcal Y(T))
    & = &
          \ext^{m+1}_A(\mathcal C_1,\mathcal T(T))=0 \\
    \ext^{m+1}_B(\mathcal C_1,\mathcal X(T))
    & = &
          \ext^{m+2}_A(\mathcal C_1,\mathcal F(T))=0\,,
  \end{array}
  \]
  it follows that $\dimp_B\,\mathcal C_1\leqslant m$. Also, $\di_B\,\mathcal C_2\leqslant n+1$ because
  \[
  \begin{array}{rcl}
    \ext^{n+2}_B(\mathcal X(T), \mathcal C_2)
    & = &
          \ext^{n+1}_A(\mathcal F(T),\mathcal C_2) = 0 \\
    \ext^{n+2}_B(\mathcal Y(T),\mathcal C_2)
    & = &
          \ext^{n+2}_A(\mathcal T(T),\mathcal C_2) = 0\,.
  \end{array}
  \]
  This proves the theorem.
\end{proof}

Now, here is the announced relationship between $(m,n)$-quasitilted
algebras and $(m,n)$-almost hereditary ones.
\begin{cor}
  \label{sec:m-n-quasitilted-3}
  If $A$ is a $(m,n)$-quasitilted algebra, then $A$ is $(m,n)$-almost hereditary.
\end{cor}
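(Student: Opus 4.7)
The plan is to prove this corollary by induction on the length $\ell := m + n - 2 \geq 0$ of a defining sequence $(A_i, T_i, A_{i+1})_{0 \leq i \leq \ell - 1}$ exhibiting $A$ as an $(m,n)$-quasitilted algebra. For the base case $\ell = 0$, so that $m = n = 1$, we have $A = A_0$ quasitilted of global dimension two, and the characterisation of such algebras recalled at the start of Section~\ref{sec:m-n-quasitilted-1} is precisely that $A$ is $(1,1)$-almost hereditary.

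For the inductive step, I would truncate the defining sequence at its penultimate triple: the truncation $(A_i, T_i, A_{i+1})_{0 \leq i \leq \ell - 2}$ still starts with the same quasitilted algebra $A_0$ of global dimension two and still has strictly increasing global dimensions, so it witnesses that $A_{\ell - 1}$ is $(m, n-1)$-quasitilted if $T_{\ell - 1}$ is tilting, and $(m-1, n)$-quasitilted if $T_{\ell - 1}$ is cotilting. The inductive hypothesis then gives that $A_{\ell - 1}$ is correspondingly $(m, n-1)$- or $(m-1, n)$-almost hereditary, and applying Theorem~\ref{sec:m-n-quasitilted} in the former case, or its cotilting dual in the latter, to the stair splitting module $T_{\ell - 1}$ delivers the desired $(m,n)$-almost hereditary conclusion for $A = A_\ell$.

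The step I expect to be the main obstacle is actually not in this proof but in Theorem~\ref{sec:m-n-quasitilted}, which already packages together both the global dimension increment $(Q1)$ and the projective/injective dimension dichotomy $(Q2)$ under a stair (co)tilting step. Granting that theorem together with its cotilting dual (which follows from the general tilting/cotilting duality mentioned at the end of Section~\ref{sec:preliminaries}), the only care required in the induction is the parameter bookkeeping: a tilting step increments the second index by one while a cotilting step increments the first index by one, so that starting from the $(1,1)$-almost hereditary algebra $A_0$ and performing $n - 1$ tiltings and $m - 1$ cotiltings in any order yields exactly an $(m, n)$-almost hereditary algebra, irrespective of the order in which the stair (co)tilting modules are arranged in the defining sequence.
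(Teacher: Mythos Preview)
Your proposal is correct and follows essentially the same approach as the paper: start from the $(1,1)$-almost hereditary quasitilted algebra $A_0$ and iterate Theorem~\ref{sec:m-n-quasitilted} (and its cotilting dual) along the defining sequence, each tilting step incrementing $n$ and each cotilting step incrementing $m$. The paper compresses the induction into a single sentence, while you spell it out explicitly, but the content is identical.
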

\begin{proof}
  Let $A$ be a $(m,n)$-quasitilted algebra. By definition, there
  exists a sequence of triples $(A_i, T_i, A_{i+1} = (\End_{A_i}
  T_i)^{\rm op})$
  such that $A_0$ is strict quasitilted, each $T_i$ is a stair and
  splitting tilting or cotilting $A_i$-module and $A = A_{m+n-2}$,
  where $m-1$ and $n-1$ are the numbers of cotilting and tilting
  processes, respectively. As mentioned before, $A_0$ is
  $(1,1)$-almost hereditary and therefore, according to
  Theorem~\ref{sec:m-n-quasitilted}, $A$ is $(m,n)$-almost hereditary.
\end{proof}

\begin{ex}
Let $A$ be the path $k$-algebra given by the bound quiver $1 \xrightarrow{\alpha_1} 2 \xrightarrow{\alpha_2} \cdots \xrightarrow{\alpha_{7}} 8$, with relations $\alpha_{i+4} \cdots \alpha_{i} = 0$, where $1\leq i \leq 3$. Consider the tilting $A$-module $T =P_4 \oplus T_2\oplus T_3\oplus T_4 \oplus S_4\oplus P_3 \oplus P_2 \oplus P_1$. Note that $B =  (\End{}_{A} T)^{\rm op}$ is a tilted algebra, which yields that $A$ is $(1,2)$-quasitilted, since  $A \simeq \End T_B$. According to Corollary \ref{sec:m-n-quasitilted-3}, $A$ is also $(1,2)$-almost hereditary. It is an easy verification that ${\cal L}_A$ and ${\cal R}_A^2$ consist, respectively, of those modules in horizontal and vertical lines patterned areas in the illustration below. We point out that ${\cal L}_A \cup {\cal R}_A \subsetneq \ind A$, since $M$ has projective and injective dimensions equal to $2$.

\begin{center}
\begin{tikzpicture}[baseline= (a).base]
\node (a) at (-20,0){

\begin{tikzcd}[row sep=0.3cm, column sep=0.01cm]
P_8 \ar[rr, dash, dotted] \ar[dr] && S_7\ar[rr, dash, dotted] \ar[dr] && S_6 \ar[rr, dash, dotted] \ar[dr] && S_5 \ar[rr, dash, dotted] \ar[dr] && S_4 \ar[rr, dash, dotted] \ar[dr] && S_3 \ar[rr, dash, dotted] \ar[dr] && S_2 \ar[rr, dash, dotted] \ar[dr] && I_1 \\
&  P_7 \ar[dr] \ar[ur] \ar[rr, dash, dotted] && \bullet \ar[dr] \ar[ur] \ar[rr, dash, dotted] && \bullet \ar[dr] \ar[ur] \ar[rr, dash, dotted] && T_4 \ar[dr] \ar[ur] \ar[rr, dash, dotted] && \bullet\ar[dr] \ar[ur] \ar[rr, dash, dotted] && \bullet \ar[dr] \ar[ur] \ar[rr, dash, dotted] && I_2 \ar[ur] & \\
&& P_6\ar[dr] \ar[ur] \ar[rr, dash, dotted] && \bullet \ar[dr] \ar[ur] \ar[rr, dash, dotted] && T_3 \ar[dr] \ar[ur] \ar[rr, dash, dotted] && \bullet \ar[dr] \ar[ur] \ar[rr, dash, dotted] && \bullet \ar[dr] \ar[ur] \ar[rr, dash, dotted] && I_3 \ar[ur]  && \\
&&& P_5 \ar[dr] \ar[ur] \ar[rr, dash, dotted] && T_2 \ar[dr] \ar[ur] \ar[rr, dash, dotted] && M \ar[dr] \ar[ur] \ar[rr, dash, dotted] && \bullet \ar[dr] \ar[ur] \ar[rr, dash, dotted] && I_4 \ar[ur]  &&& \\
&{\cal L}_A &  & &P_4 \ar[ur]  && P_3 \ar[ur]  && P_2 \ar[ur] && P_1 \ar[ur] && {\cal R}_A^2&&   
\end{tikzcd}
};
\draw[pattern=vertical lines, pattern color=gray, draw=gray] (-17.5, -2.2) -- (-22.3,-2.2) -- (-24,0) -- (-22.5,2) -- (-14.5,2) ;
\draw[pattern=horizontal lines, pattern color=gray, draw=gray]  (-26,2.5) -- (-18, 2.5) -- (-20.6,-1) -- (-19.4,-2.7) -- (-23,-2.7);
\end{tikzpicture}

\end{center}
\end{ex}

Note that the converse of the previous corollary is not true in
general. Here is a counter-example.
\begin{ex}
  Let $A$ be the path $k$-algebra given by the bound quiver
  $1 \xrightarrow{\alpha_1}  2 \xrightarrow{\alpha_2}  \cdots \xrightarrow{\alpha_{10}}  11 \xrightarrow{\alpha_{11}}  12,
  $
  with relations  $\alpha_{i+6} \cdots \alpha_{i} = 0$, where $1 \leq i \leq 5$.
  It follows from  \cite{MR2736030} that $A$ is not piecewise
  hereditary algebra, which entails that $A$ is not
  $(1,2)$- nor $(2,1)$-quasitilted (or even $(m,n)$-quasitilted for any $m$ and
  $n$). However, it is easily seen that $A$ is $(1,2)$- and $(2,1)$-almost
  hereditary.
\end{ex}

Later, the article concentrates on $(m,1)$- and $(1,m)$-almost
hereditary algebras. The reason is the following result which states
that, although an algebra of finite global dimension may not be
$(m,n)$-almost hereditary for any $m,n$, this property becomes true
for the algebra obtained from it by a (co)tilting using a stair
splitting (co)tilting module.
\begin{lem}
  \label{sec:m-n-quasitilted-4}
  Let $d$ be a positive integer. Let $A$ be an algebra with $\gd A = d$, $T$ an $A$-module and $B= (\End_A T)^{\rm op}$. 
  \begin{enumerate}
  \item[{\it (i)}] If $T$ is a stair splitting tilting module, then $B$ is $(d,1)$-almost hereditary.
  \item[{\it (ii)}] If $T$ is a stair splitting cotilting module, then $B$ is $(1,d)$-almost hereditary.
  \end{enumerate}
  In particular, any $(m,n)$-quasitilted algebra is $(m+n-1,1)$-almost hereditary, or else
  $(1,m+n-1)$-almost hereditary.
\end{lem}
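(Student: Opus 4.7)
The plan is to prove~(i); part~(ii) will follow by a dual argument applied to $A^{\mathrm{op}}$, and the ``in particular'' claim reduces to counting global dimensions along the defining sequence. So fix $A$ with $\gd A = d$ and a stair splitting tilting $A$-module $T$, and set $B = (\End_A T)^{\mathrm{op}}$. I must show two things: that every indecomposable $B$-module satisfies $\dimp_B X \leqslant d$ or $\di_B X \leqslant 1$, and that $\gd B = d+1$.

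The dichotomy is immediate from the proof of Theorem~\ref{sec:m-n-quasitilted}. In the decomposition $\ind \mathcal T(T) = \mathcal C_1 \cup \mathcal C_2$ used there, I would take $\mathcal C_1 = \ind \mathcal T(T)$ and $\mathcal C_2 = \emptyset$, which is legitimate since $\gd A = d$ forces $\dimp_A X \leqslant d$ for every $A$-module $X$. The Ext calculations of that proof then yield $\dimp_B Y \leqslant d$ for every $Y \in \mathcal Y(T)$ and $\di_B X \leqslant 1$ for every $X \in \mathcal X(T)$; splitting of $T$ ensures that every indecomposable $B$-module lies in $\mathcal X(T) \cup \mathcal Y(T)$, giving (Q2) with $(m,n) = (d,1)$.

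The stair hypothesis already gives $\gd B \geqslant d+1$. For the matching upper bound, Lemma~\ref{sec:m-n-quasitilted-2} only produces $\gd B \leqslant d+2$, and sharpening this to $d+1$ is where I expect the main obstacle to lie. The strategy is to fix $M, N \in \md B$ and show $\ext^{d+2}_B(M, N) = 0$ by decomposing both $M$ and $N$ through their canonical torsion short exact sequences $0 \to tM \to M \to M/tM \to 0$ relative to $(\mathcal X(T), \mathcal Y(T))$ and reducing, via the long exact sequences of Ext, to the four cases in which each argument lies in $\mathcal X(T)$ or in $\mathcal Y(T)$. These four cases are then handled by the Brenner--Butler identifications already used in Theorem~\ref{sec:m-n-quasitilted}, namely $\ext^i_B(\mathcal X, \mathcal X) \simeq \ext^i_A(\mathcal F, \mathcal F)$, $\ext^i_B(\mathcal X, \mathcal Y) \simeq \ext^{i-1}_A(\mathcal F, \mathcal T)$, $\ext^i_B(\mathcal Y, \mathcal X) \simeq \ext^{i+1}_A(\mathcal T, \mathcal F)$ and $\ext^i_B(\mathcal Y, \mathcal Y) \simeq \ext^i_A(\mathcal T, \mathcal T)$; each rewrites $\ext^{d+2}_B$ as some $\ext^j_A$ with $j \geqslant d+1 > \gd A$, which therefore vanishes.

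Part~(ii) follows by applying~(i) to $A^{\mathrm{op}}$, under which a splitting cotilting $A$-module becomes a splitting tilting $A^{\mathrm{op}}$-module and the standard $k$-duality exchanges projective with injective dimensions. For the final assertion, fix an $(m,n)$-quasitilted algebra $A$ with defining sequence $A_0, A_1, \ldots, A_{m+n-2} = A$. By Corollary~\ref{sec:m-n-quasitilted-3}, $\gd A = m+n$, while $\gd A_0 = 2$; the stair condition gives $\gd A_{i+1} \geqslant \gd A_i + 1$ at each of the $m+n-2$ steps, so the total telescoping forces each step to raise the global dimension by exactly one, and in particular $\gd A_{m+n-3} = m+n-1$. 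Applying~(i) (respectively~(ii)) to the final module $T_{m+n-3}$ when it is tilting (respectively cotilting) yields the claimed $(m+n-1, 1)$- or $(1, m+n-1)$-almost hereditary conclusion.
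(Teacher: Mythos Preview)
Your proof is correct and follows essentially the paper's approach: both derive $\di_B\,\mathcal{X}(T)\leqslant 1$ and $\dimp_B\,\mathcal{Y}(T)\leqslant d$ from the computations in Theorem~\ref{sec:m-n-quasitilted}, and then obtain the final assertion by applying (i) or (ii) to the last (co)tilting module in the defining sequence. You are in fact more thorough than the paper on two points it leaves implicit: the upper bound $\gd B\leqslant d+1$ (the paper is silent here, whereas you supply the direct Ext argument via Brenner--Butler) and the telescoping justification, via the stair condition and Corollary~\ref{sec:m-n-quasitilted-3}, that $\gd A_{m+n-3}=m+n-1$.
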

\begin{proof}
  {\it (i)} Since $\di_A\,{\cal F}(T) \leq 1$ we
  can use the same considerations as in the proof of
  Theorem~\ref{sec:m-n-quasitilted} and conclude that $\di_B\,{\cal X}(T)
  \leq 1$. Also, $\dimp_B\,{\cal Y}(T) \leq d$ because $\dimgl A = d$.

  Now, assume that $B$ is $(m,n)$-quasitilted. Let $(A,T,B)$ be the
  resulting last triple appearing in the definition of
  $(m,n)$-quasitilted algebras. Then $\gd A=m+n-1$, and hence $B$ is
  $(m+n-1,1)$- or $(1,m+n-1)$-quasitilted according to whether $T$ is a
  tilting or cotilting module, respectively.
\end{proof}
From now on we focus on $(m,1)$-almost hereditary algebras. The
$(1,m)$-almost hereditary algebras may be treated using dual considerations.

\section{$(m,1)$-Almost hereditary algebras}
\label{sec:m-1-almost}
The purpose of this section is to prove (\ref{eq:14})
whenever $A$ is $(m,1)$-almost hereditary, (see
Theorem~\ref{principal}).  In order to prove this theorem, preparatory
material is first established in \ref{sec:algebras-with-small} on
algebras with nice small homological properties related to $(Q1)$ and
$(Q2)$. And the theorem is proved with some consequences in \ref{sec:applications-m-1}.

\subsection{On algebras with small homological dimensions}
\label{sec:algebras-with-small}
The proof of (\ref{eq:14}) when $A$ is $(m,1)$-almost hereditary is
mainly based on the fact that ${\cal L}_A^m$ is a torsion-free class
of $A$, which is true because $\dimgl\, A=m+1$. Hence, this subsection
is devoted to the investigation of (\ref{eq:14}) in the more general
situation where
\begin{itemize}
\item the class of $A$-modules with projective dimension at most $m$
  is replaced by a torsion-free class $\mathcal C$ of $\md A$;
\item ${\cal L}_A^m$ is replaced by the class of indecomposable $A$-modules such that
  all the predecessors in $\ind A$ of which lie in $\mathcal C$, this
  class is denoted by $\mathcal L_{\mathcal C}$.
\end{itemize}
For a given positive integer $n$, this investigation establishes
(\ref{eq:13}) whenever every indecomposable $A$-module lies in
$\mathcal C$ or else has injective dimension at most $n$ (see
Proposition~\ref{sec:suggestion-section-4-5}).
The first step of this investigation is to show that there are no
nonzero morphisms from any indecomposable $A$-module not lying in
$\mathcal C$ to any indecomposable $A$-module with a large injective
dimension. This is done in the two following lemmas.
\begin{lem}
  \label{sec:suggestion-section-4-1}
  Let $\mathcal C$
  be a torsion-free class of $\md A$. Consider a nonzero morphism
  $f\colon U \to V$, where $U$ and $V$ are indecomposable modules such that
  $U\not\in \mathcal C$ and $V\in \mathcal C$, and such that $\ell(U)+\ell(V)$
  is minimal for these properties. Then $\im f\in \mathcal C$,
  $\ker f\not\in \mathcal C$ and $\ker f$ is indecomposable.
\end{lem}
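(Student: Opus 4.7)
The plan is to prove the three assertions in order, exploiting throughout that a torsion-free class $\mathcal C$ is closed under submodules, extensions, direct sums, and direct summands. The first is immediate: $\im f$ is a submodule of $V \in \mathcal C$. For the second, if $\ker f$ were in $\mathcal C$, then the short exact sequence $0 \to \ker f \to U \to \im f \to 0$ would realise $U$ as an extension of two objects of $\mathcal C$, forcing $U \in \mathcal C$ and contradicting the hypothesis on $U$.

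For indecomposability of $\ker f$, I would argue by contradiction. Suppose $\ker f = K_1 \oplus K'$, where $K_1$ is an indecomposable direct summand of $\ker f$ not lying in $\mathcal C$ (such a summand exists since $\ker f \not\in \mathcal C$ and $\mathcal C$ is closed under summands), and $K' \neq 0$. Pushing out the sequence $0 \to \ker f \to U \to \im f \to 0$ along the projection $\ker f \twoheadrightarrow K_1$ yields
\[
0 \to K_1 \to U/K' \to \im f \to 0\,.
\]
Using the decomposition $\ext^1(\im f, \ker f) = \ext^1(\im f, K_1) \oplus \ext^1(\im f, K')$, if this pushed-out sequence were split, the original $U$ would split off a nonzero copy of $K_1$, contradicting the indecomposability of $U$; hence the displayed sequence is non-split. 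Also $U/K' \not\in \mathcal C$, since it contains $K_1 \not\in \mathcal C$ as a submodule, and the induced map $U/K' \to V$ is nonzero.

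The final step splits into two subcases. If $U/K'$ is indecomposable, then $(U/K', V)$ satisfies the same hypotheses as $(U, V)$ but with strictly smaller sum of lengths, since $\ell(U/K') < \ell(U)$ as $K' \neq 0$; this contradicts minimality. Otherwise, write $U/K' = N_1 \oplus \cdots \oplus N_s$ with $s \geq 2$ and each $N_j$ indecomposable. Applying Lemma~1.1(a) to the non-split sequence above, whose kernel $K_1$ is indecomposable, yields that each coordinate map $N_j \to \im f$ is nonzero; composing with the inclusion $\im f \hookrightarrow V$ produces nonzero morphisms $N_j \to V$. Since $U/K' \not\in \mathcal C$ and $\mathcal C$ is closed under direct sums, at least one $N_j \not\in \mathcal C$, and then $(N_j, V)$ violates minimality. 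The main difficulty is precisely this decomposable subcase: one must leverage both the non-splitting of the pushed-out sequence and the indecomposability of its kernel $K_1$ so that Lemma~1.1(a) delivers enough nonzero component maps to build a smaller witness.
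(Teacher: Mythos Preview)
Your proof is correct and follows essentially the same approach as the paper's: push out the short exact sequence $0\to\ker f\to U\to\im f\to 0$ along the projection onto an indecomposable summand $K_1\notin\mathcal C$, verify the resulting sequence is non-split, and then exploit minimality to reach a contradiction via a smaller indecomposable summand of the pushout mapping nonzero to $V$. The only cosmetic differences are that the paper does not set up the proof by contradiction (it simply concludes $K=\ker f$ at the end from $\ell(U'')=\ell(U)$) and does not separate the indecomposable/decomposable cases for $U/K'$, while your non-splitting argument via the $\ext^1$ decomposition replaces the paper's factorisation argument; none of this changes the substance.
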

\begin{proof}
  Since $V\in \mathcal C$ and $\mathcal C$ is stable under taking
  submodules, it follows that
  \begin{equation}
    \label{eq:1}
    \im f\in \mathcal C.
  \end{equation}
  Using the exact sequence
  $0\to \ker f\to U\to \im f\to 0$, since $U\not\in \mathcal C$,
  $\im f\in \mathcal C$ and $\mathcal C$ is stable under extensions,
  then
  \begin{equation}
    \label{eq:2}
    \ker f\not\in \mathcal C \ \text{and $f$ is not a monomorphism.}
  \end{equation}
  Therefore, there exists an indecomposable direct summand $K$ of
  $\ker f$ such that $K\not\in \mathcal C$. Consider the push-out
  diagram
  \[
   \begin{tikzcd}
     0 \ar[r] & \ker f \ar[d] \ar[r] & U \ar[r] \ar[d] & \im f
     \ar[r] \ar{d}{1} & 0 \\
     0 \ar[r] & K \ar[r] & U'\ar[r] &  \im f \ar[r] & 0.
   \end{tikzcd}
  \]
  Notice that $U' \not\in \mathcal C$, because $K\not\in \mathcal C$ and $\mathcal C$ is closed under submodules.
  In this case, there exists an indecomposable direct summand $U''$
  of $U'$ such that $U''\not\in \mathcal C$. Consider the composite
  morphism
  \[
  U'' \to \im f \hookrightarrow V.
  \]
  This morphism is nonzero for the following reasons.
  Should the exact sequence $0 \to K \to U'\to \im f\to 0$ split,
  then the split epimorphism $\ker f \twoheadrightarrow K$ would
  factor through $U$, and hence there would exist a split epimorphism
  $U\to K$, which would entail that $\ker f=U=K$, a contradiction to
  $f$ being nonzero; therefore the exact sequence
  $0 \to K \to U'\to \im f\to 0$ does not split; accordingly, the
  coordinate morphism $U''\to \im f$ is nonzero, and hence nor is
  $U''\to V$.

  By construction, $\ell(U'')+\ell(V)\leqslant \ell(U)+\ell(V)$. By
  minimality of $\ell(U)+\ell(V)$ and because $U''\not\in \mathcal C$, it follows
  that $\ell(U'')=\ell(U)$. Accordingly, $U''=U'=U$, and hence
  $K=\ker f$. Thus $\ker f$ is indecomposable.
\end{proof}

\begin{lem}
  \label{sec:suggestion-section-4-2}
  Let $n$ be a positive integer. Let $\mathcal C$ be a torsion-free
  class of $\md A$ such that, for all $X\in \ind A$,
  \begin{center}
    $X\in \mathcal C$, or else $\di_A X\leqslant n$.
  \end{center}
  If $U$, $V\in \ind A$ such that $U\not\in \mathcal C$ and
  $\di_A V>n$, then $\hom_A(U,V)=0$.
\end{lem}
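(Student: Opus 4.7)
The plan is to argue by contradiction. Assume there is a counterexample and choose a pair $(U,V)$ of indecomposable $A$-modules with $U\notin\mathcal C$, $\di_A V>n$ and $\hom_A(U,V)\neq 0$ minimising $\ell(U)+\ell(V)$. The hypothesis on $\mathcal C$ forces $V\in\mathcal C$, since the only other option for $V$, namely $\di_A V\leqslant n$, is excluded. Fix a nonzero $f\colon U\to V$.

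I would first replay the pushout construction from the proof of Lemma~\ref{sec:suggestion-section-4-1}. Its output $U''\notin\mathcal C$ comes with a nonzero morphism to the \emph{same} $V$, so the pair $(U'',V)$ remains in our class of counterexamples, and the minimality of $(U,V)$ in this class yields $\ell(U'')\geqslant\ell(U)$; this is the only inequality used in the proof of Lemma~\ref{sec:suggestion-section-4-1}, and it gives $\im f\in\mathcal C$, $\ker f\notin\mathcal C$ and $\ker f$ indecomposable. Since $\ell(\ker f)<\ell(U)$, the pair $(\ker f,V)$ would otherwise be a shorter counterexample, so minimality forces in addition $\hom_A(\ker f,V)=0$.

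The main obstacle is to extract the final contradiction from $\di_A V>n$. My plan is to analyse the cokernel $V/\im f$. The hypothesis applied to $U,\ker f\notin\mathcal C$ gives $\di_A U,\di_A\ker f\leqslant n$, whence the long exact sequences of $\ext$ attached to $0\to\ker f\to U\to\im f\to 0$ and $0\to\im f\to V\to V/\im f\to 0$ successively yield $\di_A\im f\leqslant n$ and $\di_A(V/\im f)=\di_A V>n$. Hence $V/\im f$ admits an indecomposable direct summand $V'$ with $\di_A V'>n$, which by hypothesis lies in $\mathcal C$ and satisfies $\ell(V')<\ell(V)$. Minimality then gives $\hom_A(U,V')=0$. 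Letting $K\subseteq V$ denote the kernel of the composite $V\twoheadrightarrow V/\im f\twoheadrightarrow V'$ and applying $\hom_A(U,-)$ to $0\to K\to V\to V'\to 0$, we get $\hom_A(U,K)\cong\hom_A(U,V)\neq 0$, so some indecomposable summand $K_0$ of $K$ (which lies in $\mathcal C$ and has $\ell(K_0)<\ell(V)$) admits a nonzero morphism from $U$. The plan is to conclude by applying the pushout construction of Lemma~\ref{sec:suggestion-section-4-1} to the pair $(U,K_0)$, aiming to produce an indecomposable $U''\notin\mathcal C$ of length strictly less than $\ell(U)$ mapping nontrivially to $K_0\hookrightarrow V$, thereby contradicting the minimality of $(U,V)$. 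Verifying this strict decrease in length, which amounts to showing that the kernel of $U\to K_0$ is genuinely larger than $\ker f$ and therefore decomposes properly (handling separately the degenerate subcase in which $\im f$ already lies inside $K_0$), is the step I expect to be the hardest.
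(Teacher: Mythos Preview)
Your argument diverges from the paper's at the final stage, and the last step as sketched does not go through. The inference ``$\ker(U\to K_0)$ is strictly larger than $\ker f$, and therefore decomposes properly'' is unjustified: a proper inclusion of an indecomposable module into another module in no way forces the larger one to be decomposable. If $\ker(U\to K_0)$ happens to remain indecomposable, the pushout construction of Lemma~\ref{sec:suggestion-section-4-1} applied to $U\to K_0$ simply returns $U''=U$ and gives no length drop, so no contradiction appears. The degenerate subcase is no better: when $\im f$ already sits inside $K_0$ you get $\ker(U\to K_0)=\ker f$, and the pair $(U,K_0)$ is not itself a shorter counterexample because nothing guarantees $\di_A K_0>n$. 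As written, the argument can cycle without ever producing a strictly smaller pair.

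The paper's route after reaching $\di_A\coker f>n$ is genuinely different. First it dualises the argument of Lemma~\ref{sec:suggestion-section-4-1} (with $\mathcal C$ replaced by the extension-closed class $\{M:\di_A M\leqslant n\}$) to show that $\coker f$ itself is \emph{indecomposable}. Then, using $\di_A\ker f\leqslant n$, it lifts the Yoneda extension $0\to\im f\to V\to\coker f\to 0$ along the surjection $U\twoheadrightarrow\im f$ (via the $(n-1)$-th cosyzygy $C$ of $\coker f$ and the resulting surjection $\ext^n_A(C,U)\to\ext^n_A(C,\im f)$), producing a module $N$ that sits simultaneously in non-split short exact sequences $0\to\ker f\to N\to V\to 0$ and $0\to U\to N\to\coker f\to 0$. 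From $0\to U\to\im f\oplus N\to V\to 0$ one extracts an indecomposable summand $N'$ of $N$ with $N'\notin\mathcal C$, and from the second sequence a summand $N''$ with $\di_A N''>n$; the indecomposability of $\ker f$ and $\coker f$ makes the coordinate maps $N'\to V$ and $U\to N''$ nonzero. The standing hypothesis gives $N''\in\mathcal C$, so $N'\not\cong N''$, whence $\ell(N')+\ell(N'')\leqslant\ell(N)<\ell(U)+\ell(V)$, and one of the pairs $(N',V)$, $(U,N'')$ contradicts minimality. The construction of this $N$ is the key idea your outline is missing.
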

\begin{proof}
  Let $U$, $V\in \ind A$ be such that $U \not \in \mathcal C$ and
  $\di_A V>n$. By absurd, suppose that there exists a nonzero morphism
  $f \colon U\to V$. Assume that $\ell(U)+\ell(V)$ is minimal for these
  properties.

  \medskip

  According to Lemma~\ref{sec:suggestion-section-4-1}, $\im f\in \mathcal C$, $\ker f\not\in \mathcal C$ and
  $\ker f$ is indecomposable. Consequently,
  $\di_A \ker f\leqslant n$.

  \medskip

  Next, considering the injective dimensions of the modules in the
  following exact sequences
  \[
   \begin{tikzcd}
    0 \ar[r] & \ker f \ar[d]\ar[r] & U \ar[d]\ar[r] & \im f  \ar[d]\ar[r] & 0 \\
    0 \ar[r] & \im f \ar[r] & V \ar[r] & \coker f \ar[r] & 0
   \end{tikzcd}
  \]
  yields that $\di_A \im f\leqslant n$ and $\di_A \coker f>n$. This
  permits to prove that $\coker f$ is indecomposable. Indeed, the
  dual considerations of those following \eqref{eq:2} in the proof of
  Lemma~\ref{sec:suggestion-section-4-1} may be applied here to the
  exact sequence
  \[
  0 \rar \im f \rar V \rar \coker f \rar 0
  \]
  instead of to the exact sequence $0\rar \ker f\rar U \rar \im f \rar 0$
  provided that $\mathcal C$ is replaced by $\{M \in \md A;
  \di_A M\leqslant n\}$, which is stable under extensions. Accordingly,
  $\coker f$ is indecomposable.

  \medskip

  Finally, consider any exact sequence
  \[
  0 \to \coker f \to I_0 \to \cdots \to I_{n-2} \to C \to 0,
  \]
  where $I_0,\ldots,I_{n-2}$ are injective modules and $C$ is the $(n-1)$-th
  cosyzygy of $\coker f$; in the particular case $n=1$, just take
  $C=\coker f$ and discard the exact sequence. From the long exact
  sequence obtained upon applying $\hom_A(C,-)$ to the exact sequence
  \[
  0 \to \ker f \to U \to \im f \to 0,
  \]
  there results an exact sequence (recall that $\di_A \ker f\leqslant
  n$)
  \[
  \ext^n_A(C,U) \to \ext^n_A(C,\im f) \to 0.
  \]
  Accordingly, there exist $N,N_0,\ldots,N_{n-2}\in \md A$ fitting
  into a commutative diagram as follows, where the rows are exact and
  the leftmost square is cocartesian
  \[
   \begin{tikzcd}
    0 \ar[r] & U \ar[r] \ar[d] & N \ar[r] \ar[d] & N_0 \ar[r] \ar[d] &
    \cdots \ar[r] & N_{n-2} \ar[r] \ar[d] & C \ar[r] \ar[d]^{1} & 0 \\
    0 \ar[r] & \im f \ar[r] & V \ar[r]  & I_0 \ar[r]  &
    \cdots \ar[r] & I_{n-2} \ar[r]  & C \ar[r] & 0;
   \end{tikzcd}
  \]
  in the particular case $n=1$, take the lower row equal to the short
  exact sequence $0\rar \im f\rar V \rar C=\coker f\rar 0$ and take the
  sequence $N_0,\ldots,N_{n-2}$ to be void.

  Consider the following commutative diagram with exact
  rows and columns, whatever the value of $n$,
  \[
   \begin{tikzcd}
    & 0 \ar[d] & 0 \ar[d]\\
    & \ker f \ar[r]^{1} \ar[d] & \ker f \ar[d]\\
    0 \ar[r] & U \ar[r] \ar[d] & N \ar[r] \ar[d] & \coker f \ar[r]
    \ar[d]^{1} & 0 \\
    0 \ar[r] & \im f \ar[r] \ar[d] & V \ar[r] \ar[d] & \coker f \ar[r]
    & 0 \\
    & 0 & 0. & &
   \end{tikzcd}
  \]
  Notice that the exact sequence
  \begin{equation}
    \label{eq:3}
    0 \rar \ker f \rar N \rar V \rar 0
  \end{equation}
  does not split since, otherwise, the composite morphism $\ker f \to U
  \to N$ would be a section and hence the indecomposable $U$ would equal
  $\ker f$, a contradiction to $f$ being nonzero; similarly, the exact
  sequence
  \begin{equation}
    \label{eq:4}
    0 \rar U \rar N \rar \coker f\rar 0
  \end{equation}
  does not split.

  Now, consider the exact sequence
  \[
  0 \rar U \rar \im\,f \oplus N \rar V \rar 0.
  \]
  On one hand, since $\mathcal C$ is stable under taking submodules and
  $U\not \in  \mathcal C$, it follows that $\im f \oplus N \not \in \mathcal C$; and
  since $\im f\in \mathcal C$, there exists an indecomposable direct
  summand $N'$ of $N$ such that $N'\not\in \mathcal C$. On the other
  hand, because $\di_A\,U\leqslant n$ and $\di_A\,V>n$, it follows that
  $\di_A \im f \oplus N>n$; and since $\di_A \im f\leqslant n$, there exists
  an indecomposable direct summand $N''$ of $N$ such that
  $\di_A N''>n$.

  Because the exact sequences \eqref{eq:3} and \eqref{eq:4} do not split
  and $\ker f$ and $\coker f$ are indecomposable modules, the coordinate
  morphisms with indecomposable domain and codomain
  \[
  N'\rar V\ \text{and}\ U \rar N''
  \]
  are nonzero. Moreover, they feature the following properties,
  \begin{itemize}
  \item $N'\not\in \mathcal C$ and $\di_A\,V>n$; and
  \item $U \not \in \mathcal C$ and $\di_A\,N''>n$.
  \end{itemize}

  By assumption on $\mathcal C$, the indecomposable modules $N'$ and $N''$ are
  not isomorphic, and hence $N'\oplus N''$ is a direct summand of $N$;
  accordingly,
  \[
  \ell(N')+\ell(N'')\leqslant \ell(N) < \ell(U)+\ell(V).
  \]
  Therefore, $\ell(N')<\ell(U)$ or else $\ell(N'')<\ell(V)$. In the
  former case, $\ell(N')+\ell(V)<\ell(U)+\ell(V)$; and, in the
  latter case, $\ell(U)+ \ell(N'')<\ell(U)+\ell(V)$. Both cases
  contradict the minimality of $\ell(U)+\ell(V)$.
\end{proof}

The second step in proving (\ref{eq:13}) consists in gathering
information on indecomposable $A$-modules which have successors in
$\ind A$ with a large injective dimension. This is done in the
following lemma.
\begin{lem}
  \label{sec:suggestion-section-4-3}
  Let $n$ be a positive integer. Let $\mathcal C$ be a torsion-free
  class in $\md A$ such that, for all $X\in \ind A$,
  \begin{center}
    $X\in \mathcal C$, or else $\di_A X\leqslant n$.
  \end{center}
  Then
  \begin{enumerate}
  \item for every path $Y\to U\to V$ in $\ind A$ such that
    $\di_A V>n$, there exists $Z\in \ind A$ such that $\di_A Z>n$
    and $\hom_A(Y,Z)\neq 0$;
  \item for every $Y\in \ind A$ which has a successor in $\ind A$
    with injective dimension greater than $n$, there exists $Z\in \ind
    A$ such that $\di_A Z>n$ and $\hom_A(Y,Z)\neq 0$;
  \item if $X\in \ind A$ is such that $\di_A X>n$, then every
    predecessor of $X$ in $\ind A$ lies in $\mathcal C$.
  \end{enumerate}
\end{lem}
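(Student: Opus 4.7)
The plan is to prove (1) first; statements (2) and (3) then follow easily. For (2), I would proceed by induction on the length $t$ of the given path from $Y$ to a successor of injective dimension greater than $n$. The case $t=1$ is trivial. For $t\geq 2$, writing the path as $Y\to Y_1\to\cdots\to Y_t$, the inductive hypothesis applied to the sub-path starting at $Y_1$ produces $Z_1\in\ind A$ with $\di_A Z_1>n$ and $\hom_A(Y_1,Z_1)\neq 0$, and then (1) applied to the length-$2$ path $Y\to Y_1\to Z_1$ yields the desired $Z$. For (3), let $X\in\ind A$ with $\di_A X>n$ and let $Y$ be a predecessor of $X$ in $\ind A$. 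Then $Y$ has a successor of injective dimension greater than $n$, so (2) provides $Z\in\ind A$ with $\di_A Z>n$ and $\hom_A(Y,Z)\neq 0$; if $Y\notin\mathcal{C}$, Lemma~\ref{sec:suggestion-section-4-2} would force $\hom_A(Y,Z)=0$, a contradiction, so $Y\in\mathcal{C}$.

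I would prove (1) by a minimal counterexample argument. Suppose some path $Y\to U\to V$ in $\ind A$ with $\di_A V>n$ admits no $Z\in\ind A$ such that $\di_A Z>n$ and $\hom_A(Y,Z)\neq 0$; choose such a counterexample minimizing $\ell(U)+\ell(V)$. Write the path as $Y\xrightarrow{f}U\xrightarrow{g}V$ and set $K=\ker g$. A few initial reductions are immediate: $\di_A U\leq n$ (otherwise $Z=U$) and the composition $Y\to V$ vanishes (otherwise $Z=V$); by Lemma~\ref{sec:suggestion-section-4-2} applied to $g$ we have $U\in\mathcal{C}$, while $V\in\mathcal{C}$ follows from $\di_A V>n$ and the standing hypothesis on $\mathcal{C}$. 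The vanishing of the composition means that $f$ factors through $K$. Decomposing $K=\bigoplus_i K_i$ into indecomposables, every $K_i$ with $\hom_A(Y,K_i)\neq 0$ must satisfy $\di_A K_i\leq n$; otherwise one could take $Z=K_i$.

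To finish, I would split $K$ as $K^{(\leq n)}\oplus K^{(>n)}$ according to the injective dimensions of its indec summands; the image of $Y$ in $K$ lies entirely in $K^{(\leq n)}$. The quotient $U/K^{(>n)}$ inherits a nonzero morphism to $V$ with kernel $K^{(\leq n)}$, and a short computation with the long exact sequence of $\ext$ produces an indecomposable summand of $U/K^{(>n)}$ of injective dimension greater than $n$. Since the counterexample hypothesis forces $\hom_A(Y,-)$ to vanish on any such summand, the image of $Y$ in $U/K^{(>n)}$ lies in the direct sum $\bar U$ of those indec summands of injective dimension at most $n$. The aim is then to isolate an indec summand $U^*$ of $\bar U$ for which both $\hom_A(Y,U^*)\neq 0$ and the induced morphism $U^*\to V$ is nonzero: this gives a strictly shorter path $Y\to U^*\to V$ and contradicts minimality. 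The hardest step, which is the main obstacle, is the degenerate case in which the $Y$-active and $V$-active indec summands of $\bar U$ are disjoint; this is dealt with by mimicking the pushout-cosyzygy construction from the proof of Lemma~\ref{sec:suggestion-section-4-2}---forming an injective copresentation of an appropriate cosyzygy of $V$ and taking the pushout of the short exact sequence $0\to K\to U\to \im g\to 0$ along $f$---to produce a new diagram of strictly smaller total length from which the contradiction follows.
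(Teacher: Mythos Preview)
Your arguments for (2) and (3) are correct and coincide with the paper's. The divergence is entirely in (1).

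For (1) the paper does something much shorter and avoids all of your casework: assuming no $Z$ with $\di_A Z>n$ and $\hom_A(Y,Z)\neq 0$ exists, it introduces the \emph{new} torsion-free class
\[
\mathcal C'=\{M\in\md A\,;\,\hom_A(Y,M)=0\}.
\]
The counterexample assumption says exactly that every indecomposable $X$ lies in $\mathcal C'$ or else satisfies $\di_A X\leqslant n$; since $\hom_A(Y,U)\neq 0$ we have $U\notin\mathcal C'$, and $\di_A V>n$. Lemma~\ref{sec:suggestion-section-4-2} applied to $\mathcal C'$ (not to $\mathcal C$) then forces $\hom_A(U,V)=0$, contradicting the path. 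That is the whole proof. The point is that Lemma~\ref{sec:suggestion-section-4-2} was deliberately stated for an arbitrary torsion-free class precisely so it can be reapplied to $\mathcal C'$ here.

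Your direct approach has a genuine gap. The claim that ``a short computation with the long exact sequence of $\ext$ produces an indecomposable summand of $U/K^{(>n)}$ of injective dimension greater than $n$'' is not justified and can fail. From $0\to K^{(>n)}\to U\to U/K^{(>n)}\to 0$ with $\di_A U\leqslant n$ one gets $\ext^{n+1}_A(-,U/K^{(>n)})\cong\ext^{n+2}_A(-,K^{(>n)})$, which vanishes whenever $\di_A K^{(>n)}=n+1$; and if $K^{(>n)}=0$ then $U/K^{(>n)}=U$ is indecomposable with $\di_A U\leqslant n$. In either situation $\bar U=U/K^{(>n)}$ has no summand of injective dimension $>n$, so your $\bar U$ equals all of $U/K^{(>n)}$ and may well be $U$ itself, giving no length drop. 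You then fall straight into the ``degenerate case'', for which your sketch (``mimicking the pushout--cosyzygy construction'') is essentially a promise to redo the proof of Lemma~\ref{sec:suggestion-section-4-2} from scratch, and the pushout you describe (along $f\colon Y\to U$, not along a quotient of the kernel) is not the construction used there. The paper's trick of changing the torsion-free class is exactly what lets you invoke that lemma instead of re-proving it.
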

\begin{proof}
  (1) Let $Y\to U \to V$ be a path in $\ind A$ such that
  $\di_A V>n$. By absurd, assume that $\hom_A(Y,Z)=0$ for all $Z\in \ind A$ such
  that $\di_A Z>n$. Denote by $\mathcal C'$ the
  following torsion-free class of $\md A$
  \[
  \mathcal C' = \{M \in \md A; \hom_A(Y,M)=0\}.
  \]
  By assumption, $U\not \in \mathcal C'$, $\di_A V>n$ and the
  following holds for all $X\in \ind A$,
  \begin{center}
    $X\in \mathcal C'$, or else $\di_A X\leqslant n$.
  \end{center}
  Apply Lemma~\ref{sec:suggestion-section-4-2} to $U$ and $V$ for
  the torsion-free class $\mathcal C'$; then $\hom_A(U,V)=0$, a
  contradiction to $U\rar V$ being nonzero. Thus, there exists $Z\in
  \ind A$ such that $\di_A Z>n$ and $\hom_A(Y,Z)\neq 0$.

  \medskip

  (2) Using (1), an induction on $\ell$ shows that, for every
  $Y\in \ind A$, if there exists a path of length $\ell$ in $\ind A$
  starting in $Y$ and ending in a module with injective dimension
  greater than $n$, then there exists $Z\in \ind A$ such that
  $\di_A Z>n$ and $\hom_A(Y,Z)\neq 0$.

  \medskip

  (3) Let $X\in \ind A$ be such that $\di_A X >n$. Let $Y$ be a
  predecessor of $X$ in $\ind A$. Following (2), there exists $Z\in
  \ind A$ such that $\di_A Z>n$ and $\hom_A(Y,Z)\neq
  0$. Lemma~\ref{sec:suggestion-section-4-2} entails that $Y\in
  \mathcal C$.
\end{proof}

Now, it is possible to prove (\ref{eq:13}).
Recall that, for every torsion-free class $\mathcal C$ of $\md A$,
the piece of notation $\mathcal L_{\mathcal C}$ denotes the class of
$X\in \ind A$ such that every predecessor of $X$ in $\ind A$ lies in
$\mathcal C$.
\begin{prop}
  \label{sec:suggestion-section-4-5}
  Let $n$ be a positive integer. Let $\mathcal C$ be a torsion-free
  class of $\md A$ such that, for all $X\in \ind A$,
  \begin{center}
    $X\in \mathcal C$, or else $\di_A X\leqslant n$.
  \end{center}
  Then $\ind A = \mathcal L_{\mathcal C}\cup \mathcal R^n_A$.
\end{prop}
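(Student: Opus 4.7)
The plan is to prove the contrapositive form: any indecomposable $A$-module that fails to lie in $\mathcal R^n_A$ must lie in $\mathcal L_{\mathcal C}$. All the technical work has been done in Lemmas \ref{sec:suggestion-section-4-1}, \ref{sec:suggestion-section-4-2}, and \ref{sec:suggestion-section-4-3}, so this final step should be essentially a one-line deduction.

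More precisely, I would fix $X \in \ind A$ and assume $X \notin \mathcal R^n_A$. By the dual definition of $\mathcal R^n_A$, this means there exists a successor $V$ of $X$ in $\ind A$ with $\di_A V > n$. Let $Y$ be any predecessor of $X$ in $\ind A$ (possibly $Y = X$ via the trivial path). Concatenating the path $Y \rightsquigarrow X$ with the path $X \rightsquigarrow V$ shows that $Y$ is also a predecessor of $V$ in $\ind A$. Applying Lemma \ref{sec:suggestion-section-4-3}(3) to $V$ (which satisfies $\di_A V > n$), we conclude that $Y \in \mathcal C$. Since $Y$ was an arbitrary predecessor of $X$, this gives $X \in \mathcal L_{\mathcal C}$, establishing $\ind A = \mathcal L_{\mathcal C} \cup \mathcal R^n_A$.

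There is no real obstacle here; the proof is a direct corollary of the machinery developed beforehand. The only thing to be careful about is the convention on paths: one must ensure that $X$ itself counts as a predecessor of $X$ (through the trivial path), so that the condition of $\mathcal L_{\mathcal C}$ is checked on $X$ too and not only on its strict predecessors. This is consistent with the conventions used in the definition of $\mathcal L_A$ recalled in the preliminaries.
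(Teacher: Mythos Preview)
Your proposal is correct and follows essentially the same approach as the paper's proof: take $X\notin\mathcal R^n_A$, pick a successor with injective dimension exceeding $n$, and invoke part~(3) of Lemma~\ref{sec:suggestion-section-4-3}. The only cosmetic difference is that the paper first concludes that the successor itself lies in $\mathcal L_{\mathcal C}$ and then uses closure under predecessors, whereas you verify directly that every predecessor of $X$ lies in $\mathcal C$; both are equivalent one-line deductions.
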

\begin{proof}
  Let $X\in \ind A \backslash \mathcal R^n_A$. Then there exists a
  successor $Z$ of $X$ in $\ind A$ such that $\di_A Z>n$. Part (3)
  of Lemma~\ref{sec:suggestion-section-4-3} entails that $Z\in
  \mathcal L_{\mathcal C}$. Thus $X\in \mathcal L_{\mathcal C}$.
\end{proof}

The previous proposition does not apply to $(m,n)$-almost hereditary
algebras for general $m$ and $n$ because, when $\dimgl A=m+n$, 
the class of $A$-modules with projective dimension at most $m$ need
not be torsion-free. However,
Proposition~\ref{sec:suggestion-section-4-5} may be applied to
certain algebras satisfying $(Q2)$ as the following result shows.
\begin{thm}
  \label{sec:suggestion-section-4-4}
  Let $m$, $n$ be positive integers. Assume that
  \begin{enumerate}[(a)]
  \item $\dimgl A = {\rm max}\{m,n\}+1$; and
  \item for all $X\in \ind A$, then $\dimp_A X\leqslant m$, or else
    $\di_A X\leqslant n$.
  \end{enumerate}
  Then $\ind A = \mathcal L^m_A \cup \mathcal R^n_A$.
\end{thm}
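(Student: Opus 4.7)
The plan is to reduce the theorem to Proposition~\ref{sec:suggestion-section-4-5} (or its dual) by exhibiting an appropriate torsion-free (respectively torsion) class, exploiting the smallness of the global dimension granted by hypothesis $(a)$.

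Write $d=\dimgl A=\max\{m,n\}+1$. I would split into two cases depending on which of $m$, $n$ achieves the maximum. In the case $m\geqslant n$, so that $d=m+1$, consider
\[
\mathcal C=\{X\in \md A\,;\,\dimp_A X\leqslant m\}\,.
\]
This class is always closed under extensions. The crucial observation is that, because $\gd A\leqslant m+1$, it is also closed under submodules: for a short exact sequence $0\to N\to M\to M/N\to 0$ with $M\in \mathcal C$, the long exact sequence of $\ext^*_A(-,-)$ together with $\dimp_A M\leqslant m$ and $\dimp_A M/N\leqslant m+1$ forces $\dimp_A N\leqslant m$. Hence $\mathcal C$ is a torsion-free class in $\md A$. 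By hypothesis $(b)$, every $X\in\ind A$ lies in $\mathcal C$ or else satisfies $\di_A X\leqslant n$. Proposition~\ref{sec:suggestion-section-4-5} then yields $\ind A=\mathcal L_{\mathcal C}\cup \mathcal R^n_A$, and unwinding the definition of $\mathcal L_{\mathcal C}$ gives $\mathcal L_{\mathcal C}=\mathcal L^m_A$, proving the theorem in this case.

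In the case $m<n$, so that $d=n+1$, the dual argument applies. The class
\[
\mathcal D=\{X\in \md A\,;\,\di_A X\leqslant n\}
\]
is closed under extensions, and closed under quotients because $\gd A\leqslant n+1$ (by the dual of the computation above). Hence $\mathcal D$ is a torsion class, and by $(b)$ every indecomposable either lies in $\mathcal D$ or has projective dimension at most $m$. The dual of Proposition~\ref{sec:suggestion-section-4-5}, obtained by dualising Lemmas~\ref{sec:suggestion-section-4-1}, \ref{sec:suggestion-section-4-2}, \ref{sec:suggestion-section-4-3}, then gives $\ind A=\mathcal L^m_A\cup \mathcal R_{\mathcal D}$, and by the dual definition $\mathcal R_{\mathcal D}=\mathcal R^n_A$.

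The main conceptual step is the verification that one of $\mathcal C$, $\mathcal D$ is a torsion-free (respectively torsion) class; this is precisely where hypothesis $(a)$ is used and it is the only reason the proposition of the previous subsection can be applied here. Everything else is an unwinding of the definitions. Note that the hypothesis $\gd A=\max\{m,n\}+1$ cannot be weakened to $\gd A\leqslant m+n$ in general, since the class $\{X\,;\,\dimp_A X\leqslant m\}$ may then fail to be closed under submodules, which is exactly the remark made by the authors immediately before the statement of the theorem.
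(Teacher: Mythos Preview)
Your proof is correct and follows essentially the same route as the paper: split according to which of $m,n$ is larger, observe that the hypothesis $\dimgl A=\max\{m,n\}+1$ makes $\{X:\dimp_A X\leqslant m\}$ torsion-free (respectively $\{X:\di_A X\leqslant n\}$ torsion), and then apply Proposition~\ref{sec:suggestion-section-4-5} or its dual together with the identification $\mathcal L_{\mathcal C}=\mathcal L^m_A$. Your write-up is in fact slightly more explicit than the paper's about why the class is closed under submodules and about what the dual statement says.
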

\begin{proof}
  Assume first that $m\geqslant n$. Denote by $\mathcal C$ the class
  \[
  \mathcal C = \{M \in \md A; \dimp_A M\leqslant m\};
  \]
  since $\dimgl A = m+1$, this is a torsion-free class in
  $\md A$. By definition,
  $\mathcal L_A^m = \mathcal L_{\mathcal C}$. Hence, the conclusion
  follows from Proposition~\ref{sec:suggestion-section-4-5}.

  When $n\geqslant m$, the conclusion follows from dual
  considerations, using the dual version of
  Proposition~\ref{sec:suggestion-section-4-5}.
\end{proof}

\subsection{Applications to $(m,1)$-almost hereditary algebras}
\label{sec:applications-m-1}

Now, we can prove the main result of this section.
\begin{thm}
  \label{principal}
  Let $A$ be a $(m,1)$-almost hereditary algebra. Then $\ind A = {\cal L}_A^m \cup {\cal R}_A$.
\end{thm}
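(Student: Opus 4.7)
The plan is to recognize this theorem as the immediate specialization to $n=1$ of the more general Theorem~\ref{sec:suggestion-section-4-4}, which was established precisely in anticipation of this application. I therefore need only verify that both hypotheses of Theorem~\ref{sec:suggestion-section-4-4} are satisfied when $A$ is $(m,1)$-almost hereditary and $n=1$.

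First, hypothesis $(a)$ asks that $\dimgl A = \max\{m,n\}+1$. Since $A$ is $(m,1)$-almost hereditary, condition $(Q1)$ gives $\dimgl A = m+1$, and since $m \geqslant 1$ this equals $\max\{m,1\}+1$. Second, hypothesis $(b)$ asks that every $X \in \ind A$ satisfies $\dimp_A X \leqslant m$ or $\di_A X \leqslant n$; this is exactly condition $(Q2)$ for $(m,1)$-almost hereditary algebras when $n=1$. Applying Theorem~\ref{sec:suggestion-section-4-4} yields $\ind A = \mathcal L_A^m \cup \mathcal R_A^1$, and the equality $\mathcal R_A = \mathcal R_A^1$ (both denote the indecomposables all of whose successors have injective dimension at most one) gives the stated conclusion.

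There is no real obstacle, because the substance of the argument has already been carried out in Section~\ref{sec:algebras-with-small}. The crucial point is that, under $\dimgl A \leqslant m+1$, the class $\mathcal C = \{M \in \md A \,;\, \dimp_A M \leqslant m\}$ is closed under submodules, quotients and extensions, and is therefore a torsion-free class to which Proposition~\ref{sec:suggestion-section-4-5} applies with $\mathcal L_{\mathcal C} = \mathcal L_A^m$. Once this torsion-freeness is available, the deep content of the proof lies in Lemmas~\ref{sec:suggestion-section-4-1}--\ref{sec:suggestion-section-4-3}, not in the present specialization, which is essentially a translation of notation.
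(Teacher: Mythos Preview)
Your proof is correct and follows exactly the paper's own argument, which consists of the single sentence ``The theorem now follows from Theorem~\ref{sec:suggestion-section-4-4}''; your verification of hypotheses $(a)$ and $(b)$ and the identification $\mathcal R_A^1=\mathcal R_A$ simply spell out what the paper leaves implicit. One minor slip in your final explanatory paragraph: the class $\mathcal C=\{M\in\md A\,;\,\dimp_A M\leqslant m\}$ is \emph{not} closed under quotients (any projective module surjects onto modules of projective dimension $m+1$ when $\dimgl A=m+1$); only closure under submodules and extensions is needed for $\mathcal C$ to be torsion-free, and both of these do hold.
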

\begin{proof}
  The theorem now
  follows from Theorem~\ref{sec:suggestion-section-4-4}.
\end{proof}

It is a consequence of Theorem \ref{principal} that the subcategories
${\cal L}_A^m$ and ${\cal R}_A$ determine a trisection in the sense of
\cite[Chapter II, Section 1, p. 36]{MR1327209} in $\ind A$, when $A$ is a $(m,1)$-almost
hereditary algebra, as stated in the following corollary.

\begin{cor} Let $A$ be an algebra which satisfies conditions {\it (i)} and {\it (ii)} as described in Theorem~\ref{sec:suggestion-section-4-4}. Then $({\cal L}_A^m \setminus {\cal R}_A^n, {\cal L}_A^m \cap {\cal R}_A^n, {\cal R}_A^n \setminus {\cal L}_A^m)$ is a trisection in $\ind A$. In particular, if $A$ is $(m,1)$-almost hereditary, then $({\cal L}_A^m \setminus {\cal R}_A, {\cal L}_A^m \cap {\cal R}_A, {\cal R}_A \setminus {\cal L}_A^m)$ is a trisection in $\ind A$.
\end{cor}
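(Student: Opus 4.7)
The plan is to verify the three defining properties of a trisection: the pairwise disjoint union equals $\ind A$; the first piece is closed under predecessors in $\ind A$; and the third piece is closed under successors in $\ind A$. The union and disjointness are immediate: disjointness is a set-theoretic tautology, and $\ind A = \mathcal{L}_A^m \cup \mathcal{R}_A^n$ is exactly the content of Theorem~\ref{sec:suggestion-section-4-4}, which applies under hypotheses {\it (i)} and {\it (ii)}.

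The only substantive step is to establish the two closure properties. First I observe that $\mathcal{L}_A^m$ is closed under predecessors in $\ind A$ by its very definition (as the class of indecomposables whose predecessors in $\ind A$ have projective dimension at most $m$), and dually $\mathcal{R}_A^n$ is closed under successors in $\ind A$. Now let $M \in \mathcal{L}_A^m \setminus \mathcal{R}_A^n$ and let $N \rightsquigarrow M$ be a path in $\ind A$. Then $N \in \mathcal{L}_A^m$ by the first closure property; moreover $N \notin \mathcal{R}_A^n$, for otherwise $M$, as a successor of $N$, would also belong to $\mathcal{R}_A^n$, contradicting the choice of $M$. Hence $N \in \mathcal{L}_A^m \setminus \mathcal{R}_A^n$. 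The dual argument, exchanging predecessors and successors and using that $\mathcal{R}_A^n$ is closed under successors, shows that $\mathcal{R}_A^n \setminus \mathcal{L}_A^m$ is closed under successors. This yields the trisection.

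For the \emph{in particular} statement, I note that when $A$ is $(m,1)$-almost hereditary, property $(Q1)$ gives $\gd A = m+1 = \max\{m,1\}+1$, while $(Q2)$ is exactly hypothesis {\it (ii)} with $n=1$; hence the hypotheses of Theorem~\ref{sec:suggestion-section-4-4} hold with $n=1$, and since $\mathcal{R}_A^1 = \mathcal{R}_A$, the desired trisection follows from the first assertion.

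I do not anticipate any real obstacle: the argument is pure bookkeeping with the closure properties built into the definitions of $\mathcal{L}_A^m$ and $\mathcal{R}_A^n$. The only point to be a little careful about is keeping straight which of the two subcategories is defined by predecessor conditions and which by successor conditions, so that the contradiction step (if $N \in \mathcal{R}_A^n$ then $M$ would be too) is correctly oriented.
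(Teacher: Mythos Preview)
Your proof is correct and is exactly the argument the paper has in mind: the corollary is stated without proof there, as an immediate consequence of Theorem~\ref{sec:suggestion-section-4-4} together with the built-in closure of $\mathcal L_A^m$ under predecessors and of $\mathcal R_A^n$ under successors. Your verification of the closure of $\mathcal L_A^m\setminus\mathcal R_A^n$ under predecessors and of $\mathcal R_A^n\setminus\mathcal L_A^m$ under successors yields the required $\hom$-vanishings for a trisection in the sense of \cite[Chapter~II, Section~1]{MR1327209}, and your handling of the $(m,1)$ case is fine.
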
 

To end this section, we discuss sufficient conditions for an algebra
to be $(m,1)$-almost hereditary. It is proved in \cite{MR1327209},
then an algebra $A$ is $(1,1)$-almost hereditary if and only if all
indecomposable projective modules belong to ${\cal L}_A$. When
replacing $\cal L_A$ and $(1,1)$ by $\cal L_A^m$ and $(m,1)$,
respectively, part of the equivalence may be proved. In order to do
so, it is again convenient to first replace the class of modules with
projective dimension at most $m$ by a torsion-free class.
\begin{prop}
  \label{sec:suggestion-section-4-6}
  Let $\mathcal C$ be a torsion-free
  class of $\md A$. If $A\in {\rm add}\,\mathcal L_{\mathcal C}$,
  then for all $X\in \ind A$,
  \begin{center}
    $X\in \mathcal C$, or else $\di_A X\leqslant 1$.
  \end{center}
\end{prop}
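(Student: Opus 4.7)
The plan is to argue by contradiction. Suppose $X \in \ind A$ with $X \notin \mathcal{C}$ and $\di_A X \geq 2$; I will produce a path in $\ind A$ from $X$ to some indecomposable projective $P_j$, which by the hypothesis $A \in \add \mathcal{L}_{\mathcal{C}}$ gives $P_j \in \mathcal{L}_{\mathcal{C}}$ and therefore forces $X \in \mathcal{C}$, a contradiction.

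First, I would produce a convenient $\ext^1$-witness. Pick $Y \in \md A$ with $\ext^2_A(Y, X) \neq 0$ and take the projective cover sequence $0 \to \Omega Y \to P(Y) \to Y \to 0$. Dimension shifting yields $\ext^1_A(\Omega Y, X) \neq 0$, so some indecomposable direct summand $N$ of $\Omega Y$ satisfies $\ext^1_A(N, X) \neq 0$. The embedding $N \hookrightarrow P(Y)$ is nonzero, hence composing with the projection onto an appropriate indecomposable direct summand $P_j$ of $P(Y)$ gives a nonzero morphism $N \to P_j$ between indecomposables, which is already a path in $\ind A$ of length one.

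Next, I would fix a nonsplit short exact sequence $0 \to X \xrightarrow{f} E \xrightarrow{g} N \to 0$ representing a nonzero class in $\ext^1_A(N, X)$, and pick any indecomposable direct summand $E'$ of $E$ (which exists because $X \neq 0$ forces $E \neq 0$). Since $X$ and $N$ are both indecomposable, parts $(a)$ and $(b)$ of the basic fact lemma in Section~\ref{sec:preliminaries} guarantee that the coordinate morphisms $X \to E'$ and $E' \to N$ are both nonzero. Concatenating with the previous morphism $N \to P_j$ produces a path $X \to E' \to N \to P_j$ in $\ind A$, exhibiting $X$ as a predecessor of $P_j \in \mathcal{L}_{\mathcal{C}}$. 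By the very definition of $\mathcal{L}_{\mathcal{C}}$, this forces $X \in \mathcal{C}$, contradicting the initial assumption.

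The main conceptual input is the observation that $A \in \add \mathcal{L}_{\mathcal{C}}$ controls not just predecessors of projectives, but, via the embedding $\Omega Y \hookrightarrow P(Y)$, predecessors of indecomposable summands of all first syzygies; the basic fact lemma then does exactly the work needed to promote an $\ext^1$-witness into a concrete path in $\ind A$. I anticipate no serious obstacle beyond carefully tracking indecomposability at each step and applying the two parts of the basic fact lemma in the correct direction.
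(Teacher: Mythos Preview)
Your proof is correct, but it takes a different route from the paper's. The paper's proof is a two-line argument using Auslander--Reiten theory: if $\di_A X\geqslant 2$, then by the standard characterisation $\hom_A(\tau^{-1}X,A)\neq 0$; hence $\tau^{-1}X$ maps nontrivially to some indecomposable projective, so $\tau^{-1}X\in\mathcal L_{\mathcal C}$, and since $X$ is a predecessor of $\tau^{-1}X$ (via the almost split sequence starting at $X$), one gets $X\in\mathcal C$.

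Your argument replaces the appeal to $\tau^{-1}$ by an explicit dimension-shift: from $\ext^2_A(Y,X)\neq 0$ you extract an indecomposable summand $N$ of a first syzygy with $\ext^1_A(N,X)\neq 0$ and an embedding into a projective, and then the basic fact lemma turns the nonsplit extension $0\to X\to E\to N\to 0$ into the required path $X\to E'\to N\to P_j$. This is longer but entirely self-contained: it uses only elementary homological algebra and the lemma already proved in Section~\ref{sec:preliminaries}, avoiding the AR formula and the identity $\di_A X\leqslant 1\Leftrightarrow \hom_A(\tau^{-1}X,A)=0$. In effect your module $N$ plays the role that $\tau^{-1}X$ plays in the paper's proof, and your nonsplit extension stands in for the almost split sequence. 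The paper's approach buys brevity by invoking standard AR machinery; yours buys transparency and would transplant more readily to contexts where that machinery is unavailable.
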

\begin{proof}
  Let $X\in \ind A$ be of injective dimension at least $2$. Then
  $\hom_A(\tau^{-1}X,A)\neq 0$. Therefore, $\tau^{-1}X\in \mathcal
  L_{\mathcal C}$, and hence $X\in \mathcal C$.
\end{proof}

Now, here is a sufficient condition for an algebra to be
$(m,1)$-almost hereditary.
\begin{prop}
  \label{sec:suggestion-section-4-7}
  Let $m$ be a positive integer. Assume that
  $A\in {\rm add}\,\mathcal L^m_A$. Then
  \begin{enumerate}
  \item $\dimgl A\leqslant m+1$ and, for all $X\in \ind A$,
    then $\dimp_A X\leqslant m$ or else $\di_A X\leqslant 1$;
  \item if, moreover, $\dimgl A=m+1$, then $A$ is
    $(m,1)$-almost hereditary.
  \end{enumerate}
\end{prop}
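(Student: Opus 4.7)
The plan is to prove part (1) in two stages (global dimension bound, then the dichotomy), and then deduce part (2) directly from the definition of $(m,1)$-almost hereditary.

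\textbf{Stage 1: $\dimgl A\leqslant m+1$.} I will show that every simple $A$-module has projective dimension at most $m+1$. Let $S$ be simple and let $p\colon P\to S$ be its projective cover, so that $\Omega S=\ker p$ is a submodule of $P$. By assumption, $P\in\add\mathcal L_A^m$, so every indecomposable summand of $P$ belongs to $\mathcal L_A^m$. For any indecomposable summand $N$ of $\Omega S$, the composite of the inclusion $\Omega S\hookrightarrow P$ with the projection $N\hookrightarrow\Omega S$ is nonzero (it is an injection on $N$), and its image lies in some indecomposable summand $P_0$ of $P$ via a nonzero component; since $P_0\in\mathcal L_A^m$ and $\mathcal L_A^m$ is closed under predecessors in $\ind A$, this forces $\dimp_A N\leqslant m$. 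Thus $\dimp_A\Omega S\leqslant m$ and hence $\dimp_A S\leqslant m+1$. Since global dimension equals the supremum of projective dimensions of simples, $\dimgl A\leqslant m+1$.

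\textbf{Stage 2: the dichotomy.} Let $\mathcal C=\{M\in\md A;\ \dimp_AM\leqslant m\}$. This class is always closed under extensions (via long exact sequences of $\ext$). Moreover, given an exact sequence $0\to N\to M\to Q\to 0$ with $M\in\mathcal C$, the piece $\ext^{m+1}_A(N,-)\to\ext^{m+2}_A(Q,-)$ of the long exact sequence is injective, and the right-hand term vanishes because $\dimgl A\leqslant m+1$ by Stage~1; hence $N\in\mathcal C$. Thus $\mathcal C$ is a torsion-free class of $\md A$. By definition $\mathcal L_A^m=\mathcal L_{\mathcal C}$, so the hypothesis reads $A\in\add\mathcal L_{\mathcal C}$. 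Proposition~\ref{sec:suggestion-section-4-6} then yields that, for every $X\in\ind A$, either $X\in\mathcal C$, i.e.\ $\dimp_A X\leqslant m$, or else $\di_A X\leqslant 1$. This completes part~(1).

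\textbf{Part (2).} If in addition $\dimgl A=m+1$, then condition $(Q1)$ holds with $(m,n)=(m,1)$, and condition $(Q2)$ is exactly the dichotomy just proved. So $A$ is $(m,1)$-almost hereditary.

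The only nontrivial step is Stage~1: one must be careful to extract from $P\in\add\mathcal L_A^m$ the bound $\dimp_A\Omega S\leqslant m$ for an \emph{arbitrary} simple $S$, which hinges on the fact that each indecomposable summand of the first syzygy of $S$ is a genuine predecessor in $\ind A$ of some indecomposable summand of the projective cover. Everything else is a routine invocation of the machinery already set up in \ref{sec:algebras-with-small}.
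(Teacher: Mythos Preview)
Your proof is correct and follows essentially the same approach as the paper: first bound the global dimension by observing that each indecomposable summand of a syzygy is a predecessor of an indecomposable projective (hence has $\dimp\leqslant m$), then use that bound to see that $\mathcal C=\{M\,;\,\dimp_AM\leqslant m\}$ is torsion-free and invoke Proposition~\ref{sec:suggestion-section-4-6}. The paper's argument is nearly identical, only stated more tersely (it asserts directly that ``the syzygy of any $A$-module has projective dimension at most $m$'' and that $\mathcal C$ is torsion-free because $\dimgl A\leqslant m+1$, without spelling out the $\ext$ computations you give).
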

\begin{proof}
  (1) Since $A\in \mathcal L^m_A$, the syzygy of any $A$-module has
  projective dimension at most $m$. Accordingly, ${\rm
    gl.dim}\,A\leqslant m+1$. Denote by $\mathcal C$ the class
  \[
  \mathcal C = \{M \in \md A\ |\ \dimp_A\,M\leqslant m\}\,;
  \]
  since ${\rm gl.dim}\,A\leqslant m+1$, this is a torsion-free class of
  $\md A$. The rest of the statement of (1) therefore follows from
  Proposition~\ref{sec:suggestion-section-4-6}.

  \medskip

  (2) follows directly from (1).
\end{proof}

We make the following conjecture.
\begin{conj}
  Any $(m,1)$-almost hereditary algebra $A$ is
  such that $A \in \add \ {\cal L}_A^m$.  
\end{conj}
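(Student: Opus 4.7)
My approach would be by contradiction. Suppose $A$ is $(m,1)$-almost hereditary but $A \notin \add \mathcal{L}_A^m$. Then some indecomposable projective module $P$ fails to lie in $\mathcal{L}_A^m$, so there exists a path $Y = Y_0 \to Y_1 \to \cdots \to Y_t = P$ in $\ind A$ with $\dimp_A Y > m$, which forces $\dimp_A Y = m+1$ since $\gd A = m+1$.

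First I would extract the local consequences. From the $(m,1)$-almost hereditary hypothesis, $\di_A Y \leq 1$; and because $Y$ is its own predecessor and $\dimp_A Y > m$, we get $Y \in \mathcal{R}_A$ by Theorem~\ref{principal}. Therefore every successor of $Y$ has injective dimension at most one, and in particular $\di_A Y_i \leq 1$ for every $i$ along the path. Moreover, the final morphism $Y_{t-1} \to P$ cannot be a split epimorphism: otherwise $Y_{t-1} \cong P$, and by iterating along the path we would eventually obtain $Y \cong P$, contradicting $\dimp_A Y = m+1$. Hence $Y_{t-1} \to P$ factors through $\rad P$, yielding a path $Y \rightsquigarrow R$ with $R$ an indecomposable summand of $\rad P$, still in $\mathcal{R}_A$.

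Next I would bring the syzygy into play. Set $Z = \Omega^m Y$; this module is non-projective with $\dimp_A Z = 1$, so some indecomposable direct summand $N$ of $Z$ satisfies $\dimp_A N = 1$. The natural inclusion $N \hookrightarrow P_{m-1}$ into the $(m{-}1)$st term of a minimal projective resolution of $Y$ provides a nonzero morphism to a projective. The plan is to combine this inclusion with the path $Y \rightsquigarrow P$, either via the connecting morphisms of the resolution or via an AR-duality computation of $\ext^{m+1}_A(Y,-)$, in order to produce a path $N \rightsquigarrow P'$ to some indecomposable projective $P'$ which would violate the $m=1$ case of the conjecture, namely the Happel--Reiten--Smal\o{} characterisation $A \in \add \mathcal{L}_A$ of quasitilted algebras of global dimension two.

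The main obstacle is precisely this translation step. For $m=1$ the statement relies on the existence of a hereditary abelian generating subcategory of $\mathcal{D}^b(\md A)$, whereas no analogous structural tool is available for general $(m,1)$-almost hereditary algebras, which need not be piecewise hereditary. A possible alternative would be an induction on $m$ via a (co)tilting procedure that strictly decreases $m$, or a direct homological comparison relating $\ext^{m+1}_A(Y,P)$ to the existence of nonzero paths in $\ind A$ ending at projectives; either route would require ideas beyond those developed in the preceding sections, which is why the statement remains conjectural.
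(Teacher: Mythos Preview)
The statement you are attempting to prove is presented in the paper as a \emph{conjecture}, not as a theorem; the paper offers no proof whatsoever and immediately follows it with an example showing that the analogous statement for $(m,n)$-almost hereditary algebras with $n>1$ fails. Consequently there is no ``paper's own proof'' against which to compare your attempt, and you yourself acknowledge in your final paragraph that the argument does not close.

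Regarding the content of your sketch: the opening reductions are sound. If $P\in\ind A$ is projective and $P\notin\mathcal L_A^m$, then there is indeed a predecessor $Y$ with $\dimp_A Y=m+1$, hence $Y\in\mathcal R_A$ by Theorem~\ref{principal}, and one may assume the last morphism of the path is not a split epimorphism, so it factors through $\rad P$. Your identification of the obstruction is also accurate: the $m=1$ case rests on the structural characterisation of quasitilted algebras via hereditary abelian categories, and no substitute for that tool is developed in the paper for general $m$. The two routes you outline (an induction on $m$ via a global-dimension-decreasing tilting, or a direct homological argument linking $\ext_A^{m+1}(Y,-)$ to paths into projectives) are plausible strategies but, as you say, would require genuinely new input. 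In short, your proposal does not contain a gap so much as it honestly stops at the same point where the paper does.
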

The corresponding statement for $(m,n)$-almost hereditary does not
hold when $n>1$ as shown in the following example.
\begin{ex}
  Let $A$ be the
  radical square zero path algebra given by the quiver $m+n+1 \rightarrow \cdots \rightarrow 2 \rightarrow 1.$
  
  Then $A$ is
  $(m,n)$-almost hereditary and $P_{m+n+1} \not \in {\cal L}_A^m$.
\end{ex}

\section{One-point extensions of $(m,1)$-almost hereditary algebras}
\label{sec:one-point-extensions}
From now on, we assume that $k$ is a field and $A$ is a finite
dimensional $k$-algebra. The purpose of this section is to investigate
how $(m,1)$-almost hereditary algebras behave under one-point
extension process. In particular, conditions for the one-point
extension to be $(m,1)$-almost hereditary are presented. First, here
is a necessary condition for a one-point extension to be a
$(m,1)$-almost hereditary algebra.
\begin{prop}
  \label{AentaoB}
  Assume that $k$ is a field. Let $B$ be a finite dimensional
  $k$-algebra with $\gd B = m+1$ and assume that $A = B[M]$ for some
  $B$-module. If $A$ is a $(m,1)$-almost hereditary algebra, then $B$ is
  $(m,1)$-almost hereditary.
\end{prop}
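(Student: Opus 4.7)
The plan is to exploit the standard fully faithful functor $\iota\colon \md B\hookrightarrow \md A$ sending a $B$-module $X$ to the triple $(X,0,0)$ — that is, to the $A$-module on which the new vertex acts trivially — and to pull the $(m,1)$-almost hereditary property down from $A$ to $B$ along $\iota$.

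The first preparatory step is to use the matrix description
\[
A \;=\; \begin{pmatrix} B & M \\ 0 & k \end{pmatrix}
\]
together with the primitive idempotents of $B$ and the idempotent $f$ at the new vertex to verify that $\iota$ is fully faithful and preserves indecomposability, and, more importantly, that every indecomposable projective $B$-module remains projective in $\md A$ via $\iota$ (the only additional indecomposable projective $A$-module being $Af$, at the new vertex). Consequently, any projective resolution of a $B$-module $X$ in $\md B$ is also one of $\iota(X)$ in $\md A$.

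From this observation I would extract two homological facts. First, $\dimp_A \iota(X) = \dimp_B X$ for every $B$-module $X$. Second, computing $\ext$-groups with a $B$-projective resolution of the first argument yields the natural isomorphism $\ext^i_A(\iota(Y),\iota(X)) \cong \ext^i_B(Y,X)$ for all $B$-modules $X,Y$ and all $i \geqslant 0$; taking $i = \di_B X$ and a witness $Y\in\md B$ with $\ext^i_B(Y,X)\neq 0$ then gives the inequality $\di_B X \leqslant \di_A \iota(X)$.

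With these in hand the conclusion is immediate. Let $X\in\ind B$; then $\iota(X)\in\ind A$, so the $(m,1)$-almost hereditary hypothesis on $A$ gives $\dimp_A \iota(X)\leqslant m$ or $\di_A \iota(X)\leqslant 1$. The two homological facts transfer these bounds to $B$, yielding $\dimp_B X \leqslant m$ or $\di_B X\leqslant 1$. Combined with the given $\gd B=m+1$, this shows that $B$ satisfies $(Q1)$ and $(Q2)$, hence is $(m,1)$-almost hereditary. The only step that requires genuine verification is the identification of the indecomposable projective $A$-modules through the matrix-idempotent calculation; the remaining deductions are purely formal. I expect no real obstacle in this direction — the harder direction, pursued in the rest of the section, is the converse implication, where one must also control the homological behaviour of the simple module at the new vertex and therefore impose further hypotheses on $M$.
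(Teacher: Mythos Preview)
Your argument is correct and follows the standard route the paper itself defers to (it cites \cite[Chapter III, Proposition 2.3]{MR1327209} for $m=1$ and observes that replacing the inequality $\dimp>1$ by $\dimp>m$ carries the proof through verbatim). One notational slip: in the paper's convention an $A$-module is written $(Y,X,f)$ with the $B$-module in the \emph{middle} slot (see Lemma~\ref{pdextension}), so the embedding sends $X$ to $(0,X,0)$ rather than $(X,0,0)$; apart from this, your two homological comparisons $\dimp_A\iota(X)=\dimp_B X$ and $\di_B X\leqslant\di_A\iota(X)$ are exactly what is needed.
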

\begin{proof}
  When $m=1$, the proposition is proved in \cite[Chapter III,
  Proposition 2.3]{MR1327209}. Up to replacing the inequality
  ``$\dimp>1$'' by ``$\dimp>m$'', the proof given there works here in the
  general case.
\end{proof}

We point out that the hypothesis on $\gd B$ above is necessary to
conclude that $B$ is $(m,1)$-almost hereditary for some positive
integer $m$ as shown in the following example.
\begin{ex}
  Let $B$ be the path algebra of the
  quiver
   \begin{center}
     \begin{tikzcd}
       5 \ar{r} \arrow[rr, bend left, dashed, dash] & 4 \ar{r} & 3 \ar{r} \arrow[rr, bend left, dashed, dash] & 2 \ar{r} & 1,
     \end{tikzcd}
   \end{center}
  which has $\gd B = 2$ but is not $(1,1)$-almost hereditary. However,
  by letting $M = S_5$ we obtain the $(2,1)$-almost hereditary algebra
  $A = B[M]$ given by the quiver
   \begin{center}
     \begin{tikzcd}
       6 \ar[r] \arrow[rr, bend left, dashed, dash] & 5 \ar[r] \arrow[rr, bend right, dashed, dash] & 4 \ar[r] & 3 \ar[r] \arrow[rr, bend left, dashed, dash] & 2 \ar[r] & 1.
     \end{tikzcd}
   \end{center}
\end{ex}

For $m = 1$, it is well known that the converse of Proposition \ref{AentaoB} does not hold true. This is also the case for each positive integer $m \ne 1$, as we can see in the following example.

\begin{ex}
  \label{not(m,1)}
  Let $B$ be the path algebra given by the quiver
   \begin{center}
     \begin{tikzcd}[column sep=small]
       m+3  \ar{r} & m+2 \arrow[rr, bend left, dashed, dash]   \ar{r} & m+1 \arrow[rr, bend right, dashed, dash] \ar{r} & m \ar{r} & \cdots \ar{r}\arrow[rr, bend right, dashed, dash] & 3 \ar{r} \arrow[rr, bend left, dashed, dash] & 2 \ar{r} & 1. 
     \end{tikzcd}
   \end{center}
  where all the paths of length two with source in $\{1,\ldots,m+2\}$ are relations.
  It can be easily checked that $B$ is $(m,1)$-almost hereditary. Now,
  taking $M = I_{m+3}=S_{m+3}$, the one-point extension algebra $A = B[M]$ is such that $\gd A = \gd B$, but it is not $(m,1)$-almost hereditary, since $\dimp_A S_{m+2} = m+1$ and $\di_A S_{m+2} = 2$. \\
\end{ex}

Besides Example~\ref{not(m,1)}, if $M \in \add  \ {\cal L}_B^m$ then property $(Q2)$ of the definition of $(m,n)$-almost hereditary algebra is satisfied by those indecomposable $B[M]$-modules of the shape $(0,X,0)$, as stated in the following proposition.

\begin{prop}
  Assume that $k$ is a field. Let $B$ be a $(m,1)$-almost hereditary
  algebra oer $k$ and let $A = B[M]$ for some $B$-module $M$. If
  $M \in \add \ {\cal L}^m_B$, then:
  \begin{enumerate}
  \item[{\it (i)}] $\gd A = m+1$,
  \item[{\it (ii)}] if $(0,X,0)$ is an indecomposable $A$-module, then
    $\dimp_A (0,X,0) \leq m$, or else $\di_A (0,X,0) \leq 1$.
  \end{enumerate}
\end{prop}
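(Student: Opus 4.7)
The plan is to compute projective and injective dimensions over $A=B[M]$ from those over $B$ by using the explicit description of $\md A$ as the category of triples $(V,Y,f)$ with $V$ a $k$-vector space, $Y\in\md B$ and $f\colon M\otimes_k V\to Y$ a $B$-morphism. In this description, the indecomposable projectives are $(0,P_i^B,0)$ at the old vertices and $P_\ast^A=(k,M,\mathrm{id})$ at the extension vertex, and there is a canonical short exact sequence $0\to(0,M,0)\to P_\ast^A\to S_\ast^A\to 0$. Two identities would be established first and would organise the whole proof: the minimal projective $B$-resolution of any $Y\in\md B$ lifts to a minimal projective $A$-resolution of $(0,Y,0)$ using only projectives of the form $(0,P_i^B,0)$, so $\dimp_A(0,Y,0)=\dimp_B Y$; and since $\hom_A((0,P,0),(0,-,0))=\hom_B(P,-)$, the same resolutions give a natural isomorphism $\ext^i_A((0,Y,0),(0,X,0))\cong\ext^i_B(Y,X)$ for every $i\geqslant 0$ and all $X,Y\in\md B$.

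For (i), applying the first identity to $Y=M$ and using $M\in\add\mathcal L_B^m$ (so $\dimp_B M\leqslant m$), the displayed short exact sequence yields $\dimp_A S_\ast^A\leqslant m+1$. An arbitrary $A$-module $(V,Y,f)$ sits in the short exact sequence $0\to(0,Y,0)\to(V,Y,f)\to(V,0,0)\to 0$, whose extremes have projective dimension at most $\gd B=m+1$ and $\dimp_A S_\ast^A\leqslant m+1$, respectively, so $\gd A\leqslant m+1$. The reverse inequality follows by taking an indecomposable $B$-module $X$ with $\dimp_B X=\gd B=m+1$, for which $\dimp_A(0,X,0)=m+1$.

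For (ii), let $X\in\ind B$; the $(m,1)$-almost hereditarity of $B$ gives $\dimp_B X\leqslant m$ or $\di_B X\leqslant 1$. In the first case, $\dimp_A(0,X,0)=\dimp_B X\leqslant m$ and we are done. In the second case I would prove $\di_A(0,X,0)\leqslant 1$ by checking that $\ext^2_A(S,(0,X,0))=0$ for every simple $A$-module $S$ (d\'evissage). For an old simple $S_i^A=(0,S_i^B,0)$, the second identity reduces this to $\ext^2_B(S_i^B,X)=0$, which holds because $\di_B X\leqslant 1$. For $S_\ast^A$, the long exact sequence associated with $0\to(0,M,0)\to P_\ast^A\to S_\ast^A\to 0$, together with the projectivity of $P_\ast^A$, yield $\ext^2_A(S_\ast^A,(0,X,0))\cong\ext^1_A((0,M,0),(0,X,0))=\ext^1_B(M,X)$.

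The main obstacle is to prove $\ext^1_B(M,X)=0$ in the remaining subcase where $\di_B X\leqslant 1$ and $\dimp_B X>m$; this is where the hypothesis $M\in\add\mathcal L_B^m$ is genuinely used, rather than merely $\dimp_B M\leqslant m$. I would argue by contradiction: if $\ext^1_B(M',X)\neq 0$ for some indecomposable summand $M'$ of $M$, then there is a non-split extension $0\to X\to E\to M'\to 0$. Since both $X$ and $M'$ are indecomposable, the basic fact on non-split exact sequences from the preliminaries ensures that, for every indecomposable summand $E_j$ of $E$, both coordinate morphisms $X\to E_j$ and $E_j\to M'$ are nonzero, producing a path $X\to E_j\to M'$ in $\ind B$. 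Then $X$ is a predecessor of $M'\in\mathcal L_B^m$, so $\dimp_B X\leqslant m$, contradicting the subcase hypothesis and finishing the proof.
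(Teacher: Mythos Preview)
Your proof is correct and essentially spells out the argument the paper only sketches by pointing to \cite[Chapter III, Lemma 2.5]{MR1327209} and saying ``replace $\dimp>1$ by $\dimp>m$''. In particular, your reduction of $\di_A(0,X,0)\leqslant 1$ to the vanishing of $\ext^1_B(M,X)$ is exactly the mechanism the paper uses elsewhere (cf.\ the appeal to \cite[Chapter III, Lemma 2.2]{MR1327209} in the proof of Theorem~\ref{sec:one-point-extensions-1}), and your predecessor argument via a non-split extension and the basic lemma on coordinate morphisms is the standard way to exploit $M\in\add\,\mathcal L_B^m$. The one cosmetic difference is that for (i) the paper would cite the formula $\gd A=\max\{\gd B,\dimp_B M+1\}$ from \cite[Chapter III, Proposition 1.3]{MR1327209}, whereas you derive the bound directly from the short exact sequence $0\to(0,Y,0)\to(V,Y,f)\to(V,0,0)\to 0$; both are fine.
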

\begin{proof}
  When $m=1$, the proposition is proved in \cite[Chapter III, Lemma
  2.5]{MR1327209}. The proof given there may be adapted to prove the
  general case by replacing the inequality ``$\dimp>1$'' by ``$\dimp>m$''.
\end{proof}

In order to present a sufficient condition for $B = A[M]$ to be a $(m,1)$-almost hereditary algebra, the following technical lemma is needed.

\begin{lem}
  \label{pdextension}
  Assume that $k$ is a field. Let $B$ be a finite dimensional $k$-algebra
  with $\gd B = m+1$ and assume that  $A = B[M]$ for a $B$-module $M$. If
  $(Y, X, f)$ is an $A$-module, then $\dimp_A (Y, X, f) \leq m$ if and
  only if:
  \begin{enumerate}
  \item[$(1)$] $\dimp_B \ker f \leq m-1$,
  \item[$(2)$] $\ext_B^{m-1}(\ker f, -) \xrar{\theta} \ext_B^{m+1}(\coker f, -) \rar 0$ is an exact sequence, where $\theta$ is naturally induced by $f$.
  \end{enumerate} 
\end{lem}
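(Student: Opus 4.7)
The plan is to exploit the standard description of $A$-modules as triples $(Y',X',f')$, with $X'\in \md B$, $Y'\in \md k$, and $f'\colon M\otimes_k Y'\to X'$ a $B$-module morphism, together with the fact that the indecomposable projective $B$-modules are also projective as $A$-modules via the embedding $N\mapsto (0,N,0)$. In particular, for every $A$-module $L$ with underlying $B$-module $X'$, one has $\ext^j_A((0,N,0),L)\cong \ext^j_B(N,X')$ for every $B$-module $N$ and every $j\geqslant 0$, which follows by applying $(0,-,0)$ to a projective $B$-resolution of $N$. The idea is to reduce the computation of $\dimp_A(Y,X,f)$ to the $B$-module cohomology of $\ker f$ and $\coker f$ by means of two canonical short exact sequences in $\md A$.

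First, I would set up the short exact sequences
\[
(\mathrm{I})\quad 0\to (0,\ker f,0)\to (Y,M\otimes_k Y,\mathrm{id})\to (Y,\im f,\bar f)\to 0
\]
and
\[
(\mathrm{II})\quad 0\to (Y,\im f,\bar f)\to (Y,X,f)\to (0,\coker f,0)\to 0,
\]
where $\bar f$ is the surjection induced by $f$ and the middle term of $(\mathrm{I})$ is a direct sum of copies of the indecomposable projective $A$-module $P_*=(k,M,\mathrm{id}_M)$ at the extension vertex. Dimension shifting applied to $(\mathrm{I})$ then yields $\ext^{j+1}_A((Y,\im f,\bar f),L)\cong \ext^j_B(\ker f,X')$ for $j\geqslant 1$. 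Feeding this into the long exact sequence of $\hom_A(-,L)$ applied to $(\mathrm{II})$ and using $\gd B=m+1$ to kill the degree $m+2$ term on the right, I obtain, for every $A$-module $L$,
\[
\ext^{m-1}_B(\ker f,X')\xrightarrow{\theta_L}\ext^{m+1}_B(\coker f,X')\to \ext^{m+1}_A((Y,X,f),L)\to \ext^m_B(\ker f,X')\to 0.
\]

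Since $\dimp_A(Y,X,f)\leqslant m$ is equivalent to $\ext^{m+1}_A((Y,X,f),-)$ being the zero functor on $\md A$, and since every $B$-module arises as the underlying $B$-module of some $A$-module (take $L=(0,X',0)$), the above sequence shows that $\dimp_A(Y,X,f)\leqslant m$ is equivalent to the conjunction of $\ext^m_B(\ker f,-)=0$ (condition (1)) and the surjectivity of $\theta_L$ as a natural transformation (condition (2)). To finish, I would identify the connecting homomorphism $\theta_L$ obtained this way with the map naturally induced by $f$, namely the Yoneda product with the class in $\ext^2_B(\coker f,\ker f)$ of the 2-extension $0\to \ker f\to M\otimes_k Y\to X\to \coker f\to 0$; this follows from the naturality of connecting homomorphisms and the fact that $(\mathrm{I})$ and $(\mathrm{II})$ splice in $\md A$ into a 2-extension whose $X$-components form this $B$-module 2-extension.

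The main delicate point is the case $m=1$, where the dimension-shifting isomorphism displayed above is available only for $j\geqslant 1$ and hence does not directly compute $\ext^1_A((Y,\im f,\bar f),L)$. In that case, I would instead describe $\ext^1_A((Y,\im f,\bar f),L)$ as the cokernel of the map $\hom_A(P_*^{\dim_k Y},L)\to \hom_B(\ker f,X')$ coming from $(\mathrm{I})$ and verify directly that the connecting map of $(\mathrm{II})$ gives rise, after this identification, to the natural map $\hom_B(\ker f,-)\to \ext^2_B(\coker f,-)$ induced by $f$.
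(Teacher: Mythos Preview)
Your proof is correct but follows a different route from the paper's. The paper first computes the syzygy of $(Y,X,f)$ in $\md A$: the projective cover is $(0,P_{\coker f},0)\oplus(Y,M\otimes_k Y,1)$, and the syzygy is $(0,K,0)$ where $K$ fits into $0\to \ker f\to K\to \Omega^1(\coker f)\to 0$ in $\md B$; the equivalence $\dimp_A(Y,X,f)\leqslant m \Leftrightarrow \dimp_B K\leqslant m-1$ then reduces everything to the long exact sequence of $\hom_B(-,-)$ applied to this $B$-module sequence. Your approach instead filters $(Y,X,f)$ by the two $A$-module short exact sequences $(\mathrm{I})$ and $(\mathrm{II})$ and runs the long exact sequence of $\hom_A(-,L)$ directly, using dimension shifting along $(\mathrm{I})$. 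Both arguments ultimately identify $\theta$ with the Yoneda product by the class of the same $2$-extension $0\to\ker f\to M\otimes_k Y\to X\to\coker f\to 0$. The paper's reduction has the minor advantage of working uniformly for all $m\geqslant 1$ without the separate treatment you need at $m=1$; on the other hand, your argument makes the naturality of $\theta$ in $L$ and its functorial origin more transparent, and avoids introducing the auxiliary module $K$.
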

\begin{proof}
  Let $(Y, X, f)$ be an $A$-module. Since $(0, P_{\coker f}, 0) \oplus (Y, M \otimes_k Y, 1_{M\otimes Y})$ is a projective cover of $(Y, X, f)$, it is possible to construct the following commutative diagram
   \begin{center}
     \begin{tikzcd}
       0 \ar[r] & 0 \ar[r] \ar{d} & M \otimes_k Y \ar{r}{1_{M \otimes Y}} \ar{d}{\left( \begin{smallmatrix} 1 \\ 0 \end{smallmatrix}\right)} & M \otimes_k Y \ar{r} \ar{d}{f} & 0 \\
       0 \ar{r} & K \ar{r} & M \otimes_k Y \oplus P_{\coker f} \ar{r} & X \ar{r} & 0. 
     \end{tikzcd}
   \end{center}
  In this case, $\dimp_A (Y, X, f) \leq m$ if and only if
  $\dimp_A (0,K,0) \leq m-1$. By the snake Lemma we get the exact
  sequences
  $0 \rar \ker f \rar K \rar P_{\coker f} \rar \coker f \rar 0$ and
  $0 \rar \ker f \rar K \rar \Omega^1(\coker f) \rar 0$, and from the
  latter we obtain that
  \begin{equation}
    \label{eq:5}
    \dimp_B \ker f \leq \maximo \{ \dimp_B K, \dimp_B \Omega^1(\coker f)
    - 1\}\,.
  \end{equation}

  Since $\gd B = m+1$, it follows from (\ref{eq:5}) that $\dimp_B \ker f
  \leq m-1$ if $\dimp_A (Y,X,f) \leq m$, which proves $(1)$. Now, by
  applying $\hom(\_,-)$ to $0 \rar \ker f \rar K \rar \Omega^1(\coker f)
  \rar 0$ we get the exact sequence $\ext_B^{m-1}(\ker f, -) \xrar{g}
  \ext_B^{m}(\Omega^1(\coker f), -) \rar 0$. The connecting
  morphism $\delta: \ext_B^m(\Omega^1(\coker f), -) \rar
  \ext_B^{m+1}(\coker f, -)$ is an isomorphism, hence
  $\ext_B^{m-1}(\ker f, -) \xrar{\theta} \ext_B^{m+1}(\coker f, -) \rar
  0$ is an exact sequence, where $\theta = \delta g$. 

  Conversely, assume (1) and (2). In order to prove that
  $\dimp_A(Y,X,f)\leqslant m$, it is enough to prove that $\dimp_B K \leq m-1$. Applying $\hom(\_,-)$ to $$0 \rar \ker f \rar K \rar \Omega^1(\coker f) \rar 0$$ gives the exact sequence $\ext_B^{m-1}(\ker f, -) \xrar{g} \ext_B^m(\Omega^1(\coker f), -) \rar \ext_B^m(K, -) \rar \ext_B^m(\ker f, -) = 0.$ It follows from $(2)$ that $\theta = \delta g$ is an epimorphism, and so is $g$. Hence $\ext_B^m(K, -) = 0$, 
  that is, $\dimp_B K \leq m-1$.
\end{proof}

Now, here is a sufficient condition for a one-point extension to be
$(m,1)$-almost hereditary.
\begin{thm}
  \label{sec:one-point-extensions-1}
  Assume that $k$ is a field. Let $B$ be a $(m,1)$-almost hereditary
  algebra and assume that $A = B[M]$ for a projective $B$-module $M$. If, for
  every indecomposable $A$-module $(k^t, X, f)$, one has
  $\dimp_B X \leq m$ or else $\di_B X \leq 1$, then $A$ is
  $(m,1)$-almost hereditary.
\end{thm}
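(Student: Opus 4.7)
The plan is to verify both conditions defining $(m,1)$-almost hereditarity for $A$: namely $\gd A = m+1$ and the homological dichotomy on indecomposable $A$-modules. I will parse $A = B[M]$-modules as triples $(Y, X, f)$ with $Y$ a $k$-vector space, $X \in \md B$, and $f\colon M \otimes_k Y \to X$ a $B$-linear map, and rely on three preliminary facts. First, $\dimp_A(0, X, 0) = \dimp_B X$ for every $X \in \md B$; this is because $(0, P, 0)$ is projective in $\md A$ whenever $P$ is projective in $\md B$ (the functor $\hom_A((0, P, 0), -)$ factors as $\hom_B(P, -)$ composed with the exact second-coordinate functor), so any projective resolution of $X$ in $\md B$ lifts termwise to one of $(0, X, 0)$ in $\md A$. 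Second, the simple $S_e = (k, 0, 0)$ at the extension vertex satisfies $\dimp_A S_e = 1$, since its projective cover $(k, M, 1_M) \to S_e$ has kernel $(0, M, 0)$, which is projective by the first fact together with the projectivity of $M$. Third, $S_e$ is injective in $\md A$, since $\hom_A(-, S_e)$ factors as the exact forgetful functor $(Y', X', f') \mapsto Y'$ followed by $\hom_k(-, k)$.

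Granted these facts, condition $(Q1)$ follows by applying the standard bound $\dimp N \leq \max(\dimp L, \dimp K)$ to the short exact sequence
\[
0 \to (0, X, 0) \to (Y, X, f) \to (Y, 0, 0) \to 0
\]
together with $(Y, 0, 0) \cong S_e^{\dim_k Y}$, giving $\dimp_A(Y, X, f) \leq \max(\dimp_B X, 1) \leq m+1$; the reverse inequality $\gd A \geq \gd B = m+1$ is immediate from the first fact.

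For condition $(Q2)$, the indecomposable $A$-modules split into three types: $(0, X, 0)$ with $X$ indecomposable in $\md B$; $S_e$; and $(k^t, X, f)$ with $t \geq 1$, $X \neq 0$, and $f \neq 0$ (indecomposability forces $f \neq 0$, as $(k^t, X, 0) \cong S_e^t \oplus (0, X, 0)$). For the first type, the $(m,1)$-almost hereditarity of $B$ gives $\dimp_B X \leq m$ or $\di_B X \leq 1$, and the projective case transfers directly via the first fact. For $S_e$, the second fact gives $\dimp_A S_e = 1 \leq m$. For the third, I combine the given hypothesis with the same short exact sequence above: if $\dimp_B X \leq m$ then $\dimp_A(k^t, X, f) \leq \max(\dimp_B X, 1) \leq m$, and if $\di_B X \leq 1$ then $\di_A(k^t, X, f) \leq \max(\di_A(0, X, 0), \di_A S_e^t) = \di_A(0, X, 0)$ by the third fact.

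The crux is therefore the remaining implication ``$\di_B X \leq 1 \Rightarrow \di_A(0, X, 0) \leq 1$''. To establish it, I test against an arbitrary $A$-module $(Y', X', f')$ and apply the long exact sequence of $\ext_A^{*}(-, (0, X, 0))$ to the short exact sequence $0 \to (0, X', 0) \to (Y', X', f') \to S_e^{\dim_k Y'} \to 0$. Vanishing of $\ext_A^{i}((Y', X', f'), (0, X, 0))$ for $i \geq 2$ reduces to two independent vanishings: $\ext_A^{i}(S_e, (0, X, 0)) = 0$ for $i \geq 2$, immediate from $\dimp_A S_e = 1$; and $\ext_A^{i}((0, X', 0), (0, X, 0)) \cong \ext_B^{i}(X', X) = 0$, by the lifting of $B$-projective resolutions together with $\di_B X \leq 1$. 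This is the only step warranting genuine care; the remainder of the proof consists of routine reductions via the triple decomposition.
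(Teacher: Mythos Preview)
Your proof is correct and takes a genuinely different, more elementary route than the paper's. The paper handles the case $t\neq 0$ with $\dimp_B X\leqslant m$ by invoking Lemma~\ref{pdextension}, a technical characterisation of $\dimp_A(Y,X,f)\leqslant m$ in terms of $\ker f$ and $\coker f$; it then verifies the two conditions of that lemma via several long exact sequences. For $\di_B X\leqslant 1$, the paper appeals to \cite[Chapter~III, Lemma~2.2]{MR1327209} together with $\ext_B^1(M,X)=0$. You bypass both devices with a single observation: the short exact sequence $0\to(0,X,0)\to(k^t,X,f)\to S_e^t\to 0$, combined with $\dimp_A S_e\leqslant 1$ (which uses the projectivity of $M$) and the injectivity of $S_e$, immediately bounds the projective and injective dimensions of the middle term by those of the outer terms. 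This makes the argument self-contained and avoids Lemma~\ref{pdextension} entirely. The trade-off is that Lemma~\ref{pdextension} is stated for an arbitrary $B$-module $M$, so the paper's machinery is more general and could in principle be reused for non-projective $M$; in the present theorem, however, your direct approach is both sufficient and cleaner.
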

\begin{proof}
  It is proved in \cite[Chapter III, Proposition 1.3]{MR1327209} that
  $\gd A = \maximo\{\gd B, \dimp_B M + 1\}$, which allows us to
  conclude that $\gd A = m+1$. Let $(k^t, X, f)$ be an indecomposable
  $A$-module. Assume first $t=0$. In this case, $\dimp_B X \leq m$ or
  else $\di_B X \leq 1$, since $B$ is $(m,1)$-almost hereditary and
  $X$ is an indecomposable $B$-module. Therefore
  $\dimp_A (0,X,0) \leq m$ or else $\di_A (0,X,0) \leq 1$.

  Suppose now $t \ne 0$. According to \cite[Chapter III, Lemma
  2.2]{MR1327209}, if $\di_B X \leq 1$, then $\di_A (k^t, X, f) \leq 1$,
  because $\ext_B^1(M,X) =0$. Finally, assume that $\dimp_B X \leq m$
  and consider the following short exact sequences
  \begin{equation}
    \label{eq:6}
    \left\{
      \begin{array}{l}
        0 \rar \ker f \rar M\otimes_k k^t \rar \im f \rar 0\\
        0 \rar \im f \rar X \rar \coker f \rar 0\,.
      \end{array}\right.
  \end{equation}

  \noindent Lemma~\ref{pdextension} will be applied to conclude that
  $\dimp_A (k^t, X, f) \leq m$. For this purpose we first prove that
  $\dimp_B \ker f \leq m-1$. Applying $\hom(\_, -)$ to (\ref{eq:6})
  gives rise to the exact sequences
  \begin{equation}
    \left\{
      \begin{array}{l}
        0 = \ext_B^m(M^t, -) \rar \ext_B^m(\ker f, -) \rar \ext_B^{m+1}(\im
        f, -) \rar \ext_B^{m+1}(M^t, -) = 0 \\
        0 = \ext_B^{m+1}(X, -) \rar \ext_B^{m+1}(\im f, -) \rar
        \ext_B^{m+2}(\coker f, -) = 0\,,
      \end{array}\right.
  \end{equation}
  from where we get that $\ext_B^{m}(\ker f, -) = 0$. Thus $\dimp_B \ker f \leq m-1$.

  Now, since $\dimp_B X \leq m$ there are exact sequences
  $\ext_B^{m-1}(\ker f, -) \xrar{\alpha} \ext_B^{m}(\im f, -) \rar 0$
  and $0 \rar \ext_B^m(\im f, -) \xrar{\beta} \ext_B^{m+1}(\coker f, -)
  \rar \ext_B^{m+1}(X, -) =0$. Therefore 
  \begin{center}
    $\ext_B^{m-1}(\ker f, -) \xrar{\beta \alpha} \ext_B^{m+1}(\coker f, -) \rar 0$
  \end{center}
  is an exact sequence. By Lemma \ref{pdextension}, $\dimp_A (k^t, X, f)
  \leq m$, which finishes the proof.
\end{proof}

\begin{cor}
  Assume that $k$ is a field. Let $B$ be a $(m,1)$-almost hereditary
  algebra and let $M$ be a projective $B$-module such that
  $$\hom_B(M, -)|_{{\cal R}_B \setminus {\cal L}_B^m} = 0.$$ Then
  $B[M]$ is $(m,1)$-almost hereditary.
\end{cor}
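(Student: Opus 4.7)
The plan is to apply Theorem~\ref{sec:one-point-extensions-1}. Since $M$ is projective and $B$ is $(m,1)$-almost hereditary by hypothesis, it suffices to verify the remaining assumption of that theorem: for every indecomposable $A$-module $(k^t,X,f)$, one has $\dimp_B X\leqslant m$ or else $\di_B X\leqslant 1$.

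The case $t=0$ is immediate: $(0,X,0)$ indecomposable forces $X$ to be an indecomposable $B$-module, and the $(m,1)$-almost hereditariness of $B$ then yields the required dichotomy directly.

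For $t\geqslant 1$, I would argue by an indecomposability trick. Decompose $X=\bigoplus_{j=1}^r X_j$ into indecomposable $B$-modules. By Theorem~\ref{principal}, each $X_j$ lies in $\mathcal L_B^m\cup \mathcal R_B$. Suppose that some $X_j$ belonged to $\mathcal R_B\setminus \mathcal L_B^m$; the standing hypothesis $\hom_B(M,-)|_{\mathcal R_B\setminus \mathcal L_B^m}=0$ would then force $\hom_B(M,X_j)=0$, hence $\hom_B(M^t,X_j)=0$, so the composite of $f$ with the projection $X\twoheadrightarrow X_j$ vanishes. This factorisation of $f$ through $\bigoplus_{i\neq j}X_i$ produces a decomposition
\[
(k^t,X,f)=\bigl(k^t,\textstyle\bigoplus_{i\neq j}X_i,f'\bigr)\oplus(0,X_j,0)\,,
\]
contradicting the indecomposability of $(k^t,X,f)$. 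Hence every $X_j$ lies in $\mathcal L_B^m$, so $\dimp_B X_j\leqslant m$ for all $j$, and therefore $\dimp_B X\leqslant m$.

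The only delicate point in this argument is the splitting-off observation for $t\geqslant 1$, which is a routine feature of modules over the one-point extension $B[M]$; once granted, the rest is a mechanical verification of the hypotheses of Theorem~\ref{sec:one-point-extensions-1} via the partition $\ind B=\mathcal L_B^m\cup \mathcal R_B$ provided by Theorem~\ref{principal}.
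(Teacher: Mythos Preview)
Your proof is correct and follows essentially the same route as the paper's. Both arguments reduce to Theorem~\ref{sec:one-point-extensions-1} by showing that, for an indecomposable $(k^t,X,f)$ with $t\geqslant 1$, every indecomposable summand $X_j$ of $X$ lies in $\mathcal L_B^m$; the paper phrases this as ``each $X_j$ is a successor of a summand of $M$'' while you make the underlying splitting-off observation explicit, and both then invoke $\ind B=\mathcal L_B^m\cup\mathcal R_B$ to conclude.
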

\begin{proof}
  Let $t$ be a natural integer. Notice that if
  $(k^t, X, f)$ is an indecomposable $A$-module, then any indecomposable
  direct summand of $X$ is a successor in $\ind B$ of an
  indecomposable direct summand of $M^t$. Since there is no nonzero
  morphism from $M$ to ${\cal R}_B \setminus {\cal L}_B^m$ we
  get that $X \in \add  {\cal L}_B^m$, hence $\dimp_B X \leq m$. The
  conclusion therefore follows from Theorem~\ref{sec:one-point-extensions-1}.
\end{proof}


\end{document}